\documentclass[12pt,reqno]{amsart}
\usepackage{amsmath,amssymb,amsfonts,amscd,latexsym,amsthm,mathrsfs,verbatim,textcomp}
\usepackage[colorlinks,linkcolor=black,citecolor=black]{hyperref}
\usepackage{cite}
\usepackage{hypbmsec}
\usepackage{enumerate}
\usepackage{bm}
\usepackage{tikz}
\usepackage{mathtools}
\usepackage{verbatim}
\usepackage{xfrac, mathabx}
\theoremstyle{plain}
\usepackage{manfnt}
\usepackage{tikz}
\usetikzlibrary{matrix,arrows,decorations.pathmorphing}
\usepackage[margin=1.1in]{geometry}
\usepackage{csquotes}

\usepackage{comment}
\newtheorem{lemma}{Lemma}

\newtheorem{theorem}{Theorem}
\newtheorem{conjecture}{Conjecture}
\usepackage{amsthm}
\usepackage{amssymb}

\numberwithin{equation}{section}
\numberwithin{figure}{section}
\theoremstyle{plain}
\newtheorem{thm}{\protect\theoremname}

\newtheorem{prop}{\protect\propositionname}
\theoremstyle{definition}

\theoremstyle{remark}
\newtheorem{rem}[thm]{\protect\remarkname}

\makeatletter

\newcommand{\mmod}{\,\mathrm{mod}\,}

\makeatother

\usepackage{babel}
\providecommand{\definitionname}{Definition}
\providecommand{\remarkname}{Remark}
\providecommand{\theoremname}{Theorem}
\providecommand{\corollaryname}{Corollary}
\providecommand{\propositionname}{Proposition}

\makeatletter
\@namedef{subjclassname@2020}{%
  \textup{2020} Mathematics Subject Classification}
\makeatother

\address{Lunt Hall, Room 201, Northwestern University,
2033 Sheridan Road, Evanston, IL, 60208, USA}
 \email{maksym.radziwill@gmail.com}
 
\address{Former Address: Max Planck Institute for Mathematics, 
Vivatsgasse 7, 53111~Bonn, Germany;
Current Address: 
Department of Mathematics, 
University of Wisconsin 480 Lincoln Drive, Madison, WI, 53706, USA.}
\email{technau@wisc.edu}
  
\subjclass[2020]{11J71; 37A44; 65C20}
\keywords{gap distribution; uniform distribution;
pseudo-randomness}

\begin{document}

\title{Gap distribution of $\sqrt{n} \pmod {1}$ and the circle method}
 \author{Maksym Radziwi\l{}\l{} and Niclas Technau}

 \dedicatory{ 
 Dedicated to Andrew Granville on the occasion of his $61^{th}$
 birthday}

 \begin{abstract} 
  The distribution of the properly 
  renormalized gaps of 
  $\sqrt{n} \mmod{1}$ with $n < N$
  converges (when $N\rightarrow \infty$)
  to a non-standard limit distribution, 
  as Elkies and McMullen proved in 2004
  using techniques from homogeneous dynamics. 
  In this paper we give an essentially 
  self-contained proof based on the circle method. 
  Our main innovation
  consists in showing that a new
  type of correlation functions of 
  $\sqrt{n} \mmod{1}$
  converges. To define these 
  correlation functions we restrict,
  smoothly, to those 
  $\sqrt{n} \mmod{1}$ that lie in
  minor arcs, i.e. away from rational numbers with
  small denominators. 
\end{abstract}
 
 \maketitle

\section{Introduction}

By work of Fej\'{e}r and of Csillag,
it is known that the sequence $\{n^{\alpha}\}_n$, with $\alpha > 0$ non-integer, is uniformly distributed modulo $1$. 
A much finer question asks for the \textit{gap distribution} of 
$\{ n^{\alpha} \mmod{1}\}_n$; that is after ordering the values 
$n^{\alpha} \mmod{1}$ with $n \in [N, 2N)$ in increasing order we ask for the gap distribution (at the scale $1/N$) between the ordered values. A striking folklore conjecture in this area is the following.
  \begin{conjecture}
    Let $\alpha > 0$ be non-integer. 
    Then the gap distribution of 
    $\{ n^{\alpha} \mmod{1}\}_n$ exists. 
    Furthermore the gap distribution is 
    exponential if and only if $\alpha \neq \tfrac 12$. 
  \end{conjecture}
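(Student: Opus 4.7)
My plan is to attack the conjecture through the method of moments applied to the $k$-level correlation functions of $\{n^{\alpha} \mmod 1\}_n$. Concretely, for each $k \geq 2$ and a compactly supported test function $\psi : \mathbb{R}^{k-1} \to \mathbb{R}$, set
\[
R_k(\psi; N) := \frac{1}{N} \sum_{\substack{n_1, \ldots, n_k \in [N, 2N) \\ \text{distinct}}} \psi\bigl( N \| n_2^\alpha - n_1^\alpha \|, \ldots, N \| n_k^\alpha - n_1^\alpha \| \bigr),
\]
where $\| x \|$ denotes the distance to the nearest integer. If $\lim_{N \to \infty} R_k(\psi; N)$ exists for every $k$, a standard inclusion--exclusion recovers the gap distribution from the $R_k$; the limit agrees with the $k$-point intensities of the homogeneous Poisson process of rate one exactly when the gap law is exponential, while any other accumulation produces the Elkies--McMullen distribution in the distinguished case $\alpha = 1/2$.

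Next I would unfold $\psi$ by Fourier inversion, reducing the estimation of $R_k$ to bounds and asymptotics for exponential sums
\[
S_\alpha(t; N) := \sum_{N \leq n < 2N} e(t \, n^\alpha), \qquad t \in \mathbb{R}.
\]
Here the circle method enters: partition the $t$-variable into \emph{major arcs}, where $t$ lies close to a rational $a/q$ with small $q$, and their complementary \emph{minor arcs}. On a major arc, Poisson summation evaluates $S_\alpha$ asymptotically and produces a complete exponential sum modulo $q$; for $\alpha = 1/2$ this is a quadratic Gauss sum. The paper's main technical contribution, per the abstract, is to control the contribution of the minor arcs to these correlation functions when $\alpha = 1/2$, which combined with the major-arc evaluation should yield the Elkies--McMullen law.

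The case $\alpha = 1/2$ is special because $\sqrt{m} - \sqrt{n} = (m-n)/(\sqrt{m}+\sqrt{n})$ linearises the relevant differences for $m, n \asymp N$, which both localises the problem to small denominators and makes the Gauss-sum factor non-negligible on the natural scale $1/N$. For $\alpha \neq 1/2$ no analogous linearisation is available, so I would expect the major arcs to contribute only trivial diagonal terms (vanishing after normalisation), with all mass coming from minor arcs and producing genuine Poisson statistics. To make this rigorous one would need to show both that the major-arc evaluation is asymptotically negligible and that the minor-arc correlations factor in the Poisson way, i.e.\ into products of pair correlations.

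\textbf{Main obstacle.} The hardest step, and the reason the conjecture remains open for $\alpha \neq 1/2$, is obtaining genuinely non-trivial bounds on $S_\alpha(t;N)$ and on its $k$-fold differenced variants uniformly in $t$ when $\alpha$ is irrational. Van der Corput, Weyl, and Vinogradov-type estimates lose too much to control the higher correlations unconditionally, and the robust handling of minor arcs that this paper achieves for $\alpha = 1/2$ exploits arithmetic features of $\sqrt{n}$ (quadratic reciprocity, class-number sums, the Gauss-sum evaluation) that have no direct analogue for generic $\alpha$. Any proof of the full conjecture must therefore either develop a substitute for the circle method adapted to non-polynomial phases $n^\alpha$, or find an entirely new approach, perhaps via an adaptation of the homogeneous-dynamics framework of Elkies--McMullen to higher-dimensional unipotent flows.
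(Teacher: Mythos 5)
The statement you were given is a \emph{conjecture}; the paper does not prove it, and you should not be expected to either. The authors prove only the case $\alpha = 1/2$ (the Elkies--McMullen theorem), citing \cite{radziwill2023poissonian,A,B,C,TY} for the partial results at other exponents. You acknowledge as much in your ``Main obstacle'' paragraph, so what you have written is a research plan rather than a proof, and its central step (nontrivial uniform bounds on $S_\alpha(t;N)$ and its differenced variants for irrational $\alpha$) is exactly the open problem.

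Even judged as a description of how the paper handles $\alpha = 1/2$, the sketch gets the circle-method strategy backwards in two respects. First, the paper does not estimate $k$-level correlation functions $R_k(\psi;N)$: as the introduction notes, the higher correlations of $\sqrt{n} \mmod 1$ do not even converge for $k \geq 3$ (even after removing squares). The paper instead works with the void statistic $\mathcal{V}(s,N) = \int_0^1 \mathbf{1}(\mathcal{N}(x;s,N)=0)\,dx$ and, crucially, with moments of a smoothed and \emph{restricted} counting function $\widetilde{R}_{\Phi,V}$; it is the restriction that makes the moments finite. Second, you describe the paper as bounding a minor-arc error alongside a major-arc main term, but the roles are reversed. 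In this paper ``minor arcs'' (its own, nonstandard usage) means $x$ close to a rational $a/q$ with $q \asymp \Delta\sqrt{N}$, i.e.\ \emph{large} denominator; these arcs carry measure tending to $1$. Jutila's circle method (Lemma~\ref{le:mayank}, Proposition~\ref{prop1}) is used to replace Lebesgue measure by a measure $\mu_{N,\mathcal{Q}_{\Delta,N}}$ supported entirely on those arcs at cost $O(\log\Delta/\Delta)$, and Proposition~\ref{prop3} then gives a closed formula for $R_{\Phi,V}$ there. There is no major-arc main term to evaluate: the neighborhoods of small-denominator rationals are simply discarded as negligible in measure.
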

The only case in which this conjecture is known is for $\alpha = \tfrac 12$, a result of Elkies-McMullen \cite{EM}. For the remaining exponents $\alpha$ it is confirmed that the pair correlation is Poissonian when $\alpha < \tfrac {43}{117}$ 
(see the work
of Shubin and 
the first named author \cite{radziwill2023poissonian}
and previous work of Sourmelidis, Lutsko
and the second named author \cite{A}),
and that higher correlations are Poissonian when $\alpha$ 
shrinks to zero (see \cite{B,C}) or
when $\alpha$ is randomized
and large (see \cite{TY}). 
Coincidentally the pair correlation of 
$\{ \sqrt{n} \mmod 1 \}_n$ is also Poissonian 
after removal of squares \cite{Pair}; 
however higher correlations do not exist 
even after removal of the squares. 
We refer to \cite{Pattison} for related pigeon-hole statistics.

  The proof of Elkies and McMullen relies on a connection with ergodic theory; their proof crucially uses a special case of Ratner's theorem (see \cite{Effective} for an effective version in this special case). In this paper we aim to give a new proof of the Elkies-McMullen theorem based on the circle method.
   The main disadvantage of our method is that identifying the limit explicitly requires an additional effort (that we do not undertake here); however implicit in the proof is a way to rigorously evaluate the limit to any required precision. In particular one could verify with a computer calculation that the limit is indeed not exponential.

\begin{theorem} \label{thm:elkiesmcmullen}
  Let $N \geq 1$.
  Let $\delta$ be the Dirac delta function 
  at the origin. Let $\{s_{n, N}\}_{N \leq n \leq 2N}$ 
  denote the (numerical) ordering of 
  $\{ \sqrt{n} \mmod 1 \}_{N \leq n \leq 2N}$. Then, the measure
  $$
  \frac{1}{N} \sum_{N \leq n < 2N} 
  \delta ( N (s_{n + 1, N} - s_{n,N}))
  $$
  converges weakly to a limit distribution $P(u)$ as $N \rightarrow \infty$, in other words, for every smooth compactly supported $f$, 
  $$
 \lim_{N \rightarrow \infty} \frac{1}{N} \sum_{N \leq n < 2N} 
 f ( N (s_{n + 1, N} - s_{n, N}) ) 
 = \int_{\mathbb{R}} f(u) P(u) du .
  $$
\end{theorem}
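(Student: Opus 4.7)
The plan is to prove weak convergence via the method of moments applied to the underlying point process. Writing $x_n = \sqrt{n} \mmod 1$ and fixing $a \in [0,1]$, consider the rescaled configuration $\Xi_N^{(a)} = \{ N(x_n - a) \mmod N : N \le n < 2N\}$ as a point process on $\mathbb{R}/(N\mathbb{Z})$. The gap distribution of $\{s_{n,N}\}$ is encoded in the joint law of $(\#(\Xi_N^{(a)} \cap I_1), \ldots, \#(\Xi_N^{(a)} \cap I_k))$ averaged over $a$; accordingly, to prove Theorem \ref{thm:elkiesmcmullen} it suffices to show that for every fixed $k$ and every compactly supported smooth $\phi_1, \ldots, \phi_k$, the mixed moment
$$
\mathcal{M}_k(\phi_1, \ldots, \phi_k) = \int_0^1 \sum_{\substack{n_1, \ldots, n_k \in [N, 2N) \\ n_j \text{ distinct}}} \prod_{j=1}^k \phi_j\bigl(N(x_{n_j} - a)\bigr) \, da
$$
converges as $N \to \infty$.

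Expanding each $\phi_j$ in Fourier and integrating in $a$ collapses $\mathcal{M}_k$ to multilinear exponential sums whose fundamental building blocks are $T_N(\ell) = \sum_{N \le n < 2N} e(\ell \sqrt{n})$. Poisson summation gives $T_N(\ell) = \sum_m \int_N^{2N} e(\ell \sqrt{x} - mx)\,dx$, and the stationary phase point $\sqrt{x^\ast} = \ell/(2m)$ contributes only for $m \asymp \ell/\sqrt{N}$. The crucial dichotomy is whether $\ell/(2m)$ is close to a rational $p/q$ with $q \le Q$, equivalently whether the $\sqrt{n}$'s near the saddle lie in a major or a minor arc of $\mathbb{R}/\mathbb{Z}$. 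Following the innovation indicated in the abstract, one then smoothly restricts the moments $\mathcal{M}_k$ to tuples in which each $x_{n_j}$ lies in a minor arc, i.e.\ is separated from every rational of denominator $\le Q$, and proves that these new minor-arc correlations possess a genuine limit as $N \to \infty$; this fails for the unrestricted correlations because of singular contributions from $n$ close to squares and, more generally, to $(K + p/q)^2$.

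The residual major-arc piece is structured: points $x_n$ near $p/q$ correspond to $n = (K + p/q)^2 + O(1)$ with $K$ lying in an arithmetic progression modulo $q$, producing explicit near-arithmetic patterns whose contribution to $\mathcal{M}_k$ can be computed directly in closed form. Assembling these two limits and sending $Q \to \infty$, with uniform control on both the minor-arc truncation error and the tail over $q > Q$, yields a limiting family of multi-point intensities; the standard moment/pattern-process machinery then produces weak convergence of $\mu_N$ and determines the limit $P(u)\,du$ implicitly. The hardest step is by far the convergence of the minor-arc correlation functions: it requires \emph{honest} asymptotic formulas (not mere estimates) for highly oscillatory multidimensional exponential sums in $(\ell_1, \ldots, \ell_k)$ via stationary phase and the circle method, uniformly in the dual variables and in the cutoff $Q$, together with careful inclusion-exclusion near the diagonals $n_i = n_j$. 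A secondary difficulty is the interchange of limits $N \to \infty$ and $Q \to \infty$, which must rest on a genuinely uniform bound for the major-arc tail that tames the explicit but unbounded contribution of small denominators.
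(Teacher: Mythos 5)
Your plan is organized around the method of moments for the point process: show that the mixed $k$-level correlations $\mathcal{M}_k$ converge, then invoke the moment/pattern-process machinery to deduce weak convergence of the gap measure. This is the standard route and works when correlations converge, but for $\sqrt{n}\bmod 1$ it breaks at the very first step: as the introduction notes, the higher correlation functions of this sequence \emph{do not converge} (even after removing perfect squares), precisely because of the major-arc contribution you describe. Your proposed remedy — split $\mathcal{M}_k$ into a minor-arc part (which converges) and a major-arc part which you claim can be ``computed directly in closed form'' and then ``assembled'' with the minor-arc limit as $Q\to\infty$ — does not close this gap. Both pieces are non-negative; if the total diverges and the minor-arc piece converges, the major-arc piece diverges as $Q\to\infty$, so there are not two limits to assemble. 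Even at finite $Q$, there is no mechanism given for how a divergent-in-$Q$ closed form and a convergent-in-$Q$ limit combine into a finite intensity hierarchy that determines a probability measure.

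The paper avoids this obstruction by changing the target statistic, not by taming the major arcs. It reduces (Proposition \ref{prop0}, via Marklof's lemma) to the void statistic $\mathcal{V}(s,N)=\int_0^1\mathbf{1}(\mathcal{N}(x;s,N)=0)\,dx$, which is \emph{bounded}, then shows (Proposition \ref{prop1}, Jutila's circle method) that replacing $dx$ by a probability measure $\mu_{N,\mathcal{Q}_{\Delta,N}}$ concentrated near fractions $a/q$ with $q\asymp\Delta\sqrt N$ changes $\mathcal{V}$ by $O(\log\Delta/\Delta)$ — the complement is simply discarded because it has small measure, never computed in closed form. On the support of $\mu$ the smoothed count $R_{\Phi,V}$ is given by a finite explicit sum $\widehat\Phi(0)\widehat V(0)+2\widetilde R_{\Phi,V}+O(\Delta^{-A})$ (Proposition \ref{prop3}), hence \emph{bounded} by $O_{\Phi,V}(\Delta^{10})$, so the indicator $\mathbf{1}(R=0)$ can be expanded as a rapidly converging exponential series in $\widetilde R$; the moments of $\widetilde R$ with respect to $d\mu$ are then computed with a power saving (Proposition \ref{prop4}). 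So the key ideas you're missing are: (i) pass to the bounded void statistic rather than correlation moments, (ii) restrict to minor arcs by a genuine change of measure (Jutila) rather than by decomposing $\mathcal{M}_k$, (iii) exploit the resulting boundedness of $R$ to justify the series expansion of the indicator. Without (i) and (iii) the convergence of the minor-arc correlations, even if established, cannot be converted into weak convergence of the gap distribution, because the needed inclusion–exclusion/Bonferroni series has no a priori reason to converge.
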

Elkies and McMullen's original result considers gaps with $n < N$, we choose to consider the case $N \leq n < 2N$ because it avoids some superficial technicalities.
There are some apparently new results that one can obtain with our method, for example we can show the existence
of an explicit irrational $\alpha$ such that the distribution of $\alpha \sqrt{n} \mmod{1}$ is Poisson along a subsequence. This partially addresses a question left open by Elkies and McMullen, who asked for the existence of an irrational $\alpha$ such that the distribution of $\alpha \sqrt{n} \mmod{1}$ is Poisson along \textit{all} subsequences.  We plan to investigate any further possible applications at a later time. 

\subsection{Notations and conventions}
Throughout for a Schwartz function $f$ we define
$$
\widehat{f}(\xi) := \int_{\mathbb{R}} f(x) e( - \xi x) dx
$$
and we set $e(x) := e^{2\pi i x}$. 
We also write $\| x \|$ to denote the reduction of $x$ modulo $1$, i.e $\|x \| := x \mmod{1} \in [0,1)$.
Given two integers $a,q\geq 1$,
we denote by $(a,q)$ and $[a,q]$ the greatest common
divisor and the least common multiple, respectively. 
We will frequently abbreviate the summation constraints 
$1\leq a \leq q$ and $ (a,q)=1$ by simply writing $(a,q)=1$,
for example
$$
\sum_{\substack{1\leq a \leq q \\ (a,q)=1}} a 
=\sum_{(a,q)=1} a.
$$

\subsection{Outline of proof}

By a standard reduction (that we recall in the appendix), 
to prove that the gap distribution exists, 
it suffices to show that the so-called 
\textit{void statistic} $\mathcal{V}(\cdot,N)$ 
at time $N$ has a limiting behavior (as $N\rightarrow \infty$) 
where
$$
\mathcal{V}(s,N)
:= \int_{0}^{1} \mathbf{1} 
  (\mathcal{N}(x; s, N) = 0 ) dx
$$
 and $\mathcal{N}(x; s, N)$ counts the number of 
  $ \sqrt{n} \mmod 1$ in $[x, x + s / N)$, with $N \leq n < 2N$.

   \begin{prop} \label{prop0}
     If for every $s > 0$ the limit
     $ \lim_{N \rightarrow \infty} \mathcal{V}(s,N)$
     exists then Theorem \ref{thm:elkiesmcmullen} holds. 
\end{prop}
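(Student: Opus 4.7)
The plan is to pass from pointwise convergence of the void statistic $\mathcal{V}(s, N)$ to weak convergence of the empirical gap measure $\mu_N := \tfrac{1}{N} \sum_{N \leq n < 2N} \delta_{N(s_{n+1, N} - s_{n, N})}$ by means of a Taylor-inversion identity: intuitively, the void statistic is obtained from the gap measure by a two-step integration against the kernel $\max(u - s, 0)$, and one inverts this by differentiating twice.

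First I will record the direct formula relating $\mathcal{V}(s, N)$ to $\mu_N$. Writing $g_n := s_{n+1, N} - s_{n, N}$ and decomposing $[0, 1)$ into $[0, s_{N, N})$, the pieces $(s_{n, N}, s_{n+1, N}]$ for $N \leq n < 2N$, and $[s_{2N, N}, 1)$, the Lebesgue measure of $\{x : \mathcal{N}(x; s, N) = 0\}$ in each piece is easy to compute, and summing gives
\begin{equation*}
\mathcal{V}(s, N) = \int_0^\infty \max(u - s, 0) \, d\mu_N(u) + R_N(s),
\end{equation*}
where the boundary term $R_N(s) := \max(s_{N, N} - s/N, 0) + (1 - s_{2N, N})$ satisfies $\sup_{s \geq 0} R_N(s) \to 0$; indeed, the (already-quoted) uniform distribution of $\sqrt n \mmod 1$ forces $s_{N, N} \to 0$ and $s_{2N, N} \to 1$.

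Next, for smooth compactly supported $f$, Taylor's theorem with integral remainder gives, for $u \geq 0$,
\begin{equation*}
f(u) = f(0) + u f'(0) + \int_0^\infty f''(s) \max(u - s, 0) \, ds.
\end{equation*}
Integrating this against $\mu_N$, applying Fubini, and substituting the identity for $\mathcal{V}(s, N)$ yields
\begin{equation*}
\int_\mathbb{R} f \, d\mu_N = f(0) + f'(0) \cdot (s_{2N, N} - s_{N, N}) + \int_0^\infty f''(s) \bigl( \mathcal{V}(s, N) - R_N(s) \bigr) \, ds,
\end{equation*}
using $\mu_N(\mathbb{R}_+) = 1$ and $\int u \, d\mu_N = \sum_n g_n = s_{2N, N} - s_{N, N}$.

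Passing to the limit is then immediate. The first two terms on the right tend to $f(0) + f'(0)$; since $f''$ is bounded and supported in a fixed $[0, T]$, and $|\mathcal{V}(\cdot, N)| \leq 1$ on that range, dominated convergence combined with the hypothesized $\mathcal{V}(s, N) \to V(s)$ yields
\begin{equation*}
\lim_{N \to \infty} \int_\mathbb{R} f \, d\mu_N = f(0) + f'(0) + \int_0^\infty f''(s) V(s) \, ds.
\end{equation*}
This is a bounded positive linear functional of $f$; combined with tightness of $(\mu_N)$ (immediate from $\int u \, d\mu_N \leq 1$ via Markov), it identifies a limiting probability measure $P(u) \, du$ on $\mathbb{R}_+$, which is the content of Theorem \ref{thm:elkiesmcmullen}. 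The whole argument is essentially formal; the only non-trivial input beyond the hypothesis is the disposal of the boundary term $R_N$, which is a routine consequence of equidistribution, so I do not expect any genuine obstacle.
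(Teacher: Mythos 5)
Your proposal is correct, and it takes a genuinely different route from the paper's. Both arguments rest on the same core identity (implicit in your first display and quoted by the paper from Marklof): up to boundary terms vanishing by equidistribution, $\mathcal{V}(s,N) = \int_0^\infty \max(u-s,0)\,d\mu_N(u)$. The paper then proceeds \emph{softly}: tightness gives relative compactness of $(\mu_N)$, one supposes two subsequential limits $P_1\neq P_2$, uses density of the span of the kernels $\max(\cdot-L,0)$ to find an $L$ with $\int g_L\,dP_1\neq\int g_L\,dP_2$, and invokes the hypothesized uniqueness of $\lim_N\mathcal{V}(L,N)$ to reach a contradiction. You instead \emph{invert the kernel explicitly}: Taylor's theorem with integral remainder writes any smooth compactly supported test function as $f(u)=f(0)+uf'(0)+\int_0^\infty f''(s)\max(u-s,0)\,ds$, so after Fubini you get a closed formula $\int f\,d\mu_N = f(0)+f'(0)(s_{2N,N}-s_{N,N})+\int_0^\infty f''(s)(\mathcal{V}(s,N)-R_N(s))\,ds$, and the limit is read off directly by dominated convergence, with tightness used only at the end to see that the resulting functional is integration against a probability measure. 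Your approach buys you a constructive formula for the limiting expectations $\int f\,dP = f(0)+f'(0)+\int_0^\infty f''(s)V(s)\,ds$ (consistent with the paper's stated aim of making the limit computable), and it avoids both the subsequence/contradiction scaffolding and the appeal to density of the half-line kernels; the paper's route is shorter on the page because it outsources the computation to \cite{Marklof}. One small caveat: neither argument by itself shows the limit has a density $P(u)\,du$ as the theorem's notation suggests; both establish weak convergence to a probability measure, and the absolute continuity is extra information not needed for (and not really delivered by) this reduction.
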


The main observation driving our approach is that $\mathcal{N}(x; s, N)$ can be efficiently understood when $x$ belongs to the so-called \textit{minor arcs}
$$
\bigcup_{\substack{(a,q) = 1 \\ q \asymp \Delta \sqrt{N}}} 
\Big ( \frac{a}{q} - \frac{1}{N}, \frac{a}{q} + \frac{1}{N} \Big )
$$
with $\Delta$ a large fixed constant. The measure of this set converges to $1$ as $\Delta \rightarrow \infty$. 
Our first move is to effectuate a change of measure,
replacing Lebesgue measure by a measure $\mu_{N, \mathcal{Q}_{\Delta, N}}$ supported on minor arcs.
Given a smooth non-negative function $\phi:\mathbb{R}\rightarrow \mathbb{R}$ compactly supported in $[- \tfrac{1}{100}, \tfrac{1}{100}]$ and
such that
$\int_\mathbb{R} \phi(x)dx = 1$, 
we choose
\begin{equation}\label{def Jutila measure}
d\mu_{ N, \mathcal{Q}_{\Delta, N}}(x) := \frac{N}{\sum_{\substack{q \in \mathcal{Q}_{\Delta, N}}} \varphi(q)} \sum_{\substack{(a,q) = 1 \\ q \in \mathcal{Q}_{\Delta, N}}}  \phi \Big ( N \Big ( x - \frac{a}{q} \Big ) \Big ) dx
\end{equation}
and where,  
    \begin{align} \label{eq:qu}
    \mathcal{Q}_{\Delta, N} := 
    \Big \{ a b \in [Q, 2Q]: Q^{1/1000} \leq a \leq 2 Q^{1/1000}, p | a b \implies p > \Delta^{2001}, \mu^2(a b) = 1 \Big \}
    \end{align}
    with $Q := \Delta \sqrt{N}$. The function $\phi$ is considered globally fixed, for example $\phi(x) = c \exp( - (x - 1/100)^2) \mathbf{1}_{|x| \leq 1/100}$ with an appropriate constant $c$.   
    The bilinear structure of $\mathcal{Q}_{\Delta, N}$ will be useful later. The proposition below is a nearly immediate consequence of Jutila's version of the circle method \cite{Jutila} in the form from \cite{Mayank}.

    \begin{prop} \label{prop1}
      Let $N, \Delta > 10$
      be such that $\Delta^{2001} \leq \log N$. 
 The restricted void statistic
  $$
  \mathcal{V}(s,N; \mu_{N, \mathcal{Q}_{\Delta, N}})
   := \int_{0}^{1} 
  \mathbf{1} ( \mathcal{N}(x; s, N) = 0 ) 
  \, d\mu_{ N, \mathcal{Q}_{\Delta, N}}(x)
  $$
  approximates the void statistic
  with the following quality:
  $$
  | 
  \mathcal{V}(s,N)
  - 
 \mathcal{V}(s,N; \mu_{N, \mathcal{Q}_{\Delta, N}})
  | \ll \frac{1}{\Delta^{1/2}}
    $$
where the 
implied constant is absolute. 
\end{prop}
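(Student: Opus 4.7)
The plan is to reduce the proposition to an $L^{2}$ bound on the density $\rho := d\mu_{N,\mathcal{Q}_{\Delta,N}}/dx$, and then to invoke Jutila's circle method as a black box. Set $f(x) := \mathbf{1}(\mathcal{N}(x;s,N) = 0)$, so that the difference in the statement equals $\int_{0}^{1} f(x)(1 - \rho(x))\,dx$. Because $f$ is $\{0,1\}$-valued, $\|f\|_{L^{2}}^{2} = \int f = \mathcal{V}(s,N) \leq 1$, and Cauchy--Schwarz yields
\[
\bigl| \mathcal{V}(s,N) - \mathcal{V}(s,N;\mu_{N,\mathcal{Q}_{\Delta,N}}) \bigr| \leq \sqrt{\mathcal{V}(s,N)} \cdot \|1 - \rho\|_{L^{2}([0,1])} \leq \|1 - \rho\|_{L^{2}}.
\]
It therefore suffices to prove $\|1 - \rho\|_{L^{2}} \ll \log\Delta / \Delta$.

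A direct Fourier expansion gives $\widehat{\rho}(0) = 1$ and, for $h \neq 0$,
\[
\widehat{\rho}(h) = \frac{\widehat{\phi}(h/N)}{L_{\Delta}} \sum_{q \in \mathcal{Q}_{\Delta,N}} c_{q}(h), \qquad L_{\Delta} := \sum_{q \in \mathcal{Q}_{\Delta,N}} \varphi(q),
\]
where $c_{q}(h) = \sum_{(a,q)=1} e(ha/q)$ is the Ramanujan sum. Since $\widehat{\phi}$ is Schwartz, the Fourier factor $|\widehat{\phi}(h/N)|$ is essentially supported in $|h| \lesssim N$, and by Parseval the problem reduces to a mean-square estimate for $\sum_{q \in \mathcal{Q}_{\Delta,N}} c_{q}(h)$ averaged over $h$. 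This is precisely the input quantity controlled by Jutila's circle method in the form of \cite{Mayank}. The bilinear factorization $q = ab$ built into $\mathcal{Q}_{\Delta,N}$ enters here: squarefreeness of $q$ forces $(a,b) = 1$, so $c_{ab}(h) = c_{a}(h) c_{b}(h)$, and
\[
\sum_{q \in \mathcal{Q}_{\Delta,N}} c_{q}(h) = \sum_{a} c_{a}(h) \sum_{b} c_{b}(h)\, \mathbf{1}\bigl[ab \in \mathcal{Q}_{\Delta,N}\bigr],
\]
which is exactly the bilinear shape Jutila's method is designed to treat. The prime-factor restriction $p \mid q \Rightarrow p > \Delta^{2001}$ guarantees that generic $h$ share no small common factor with $q$, keeping $|c_{q}(h)|$ small on average.

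The technical heart of the argument is this bilinear Ramanujan-sum bound. A naive Cauchy--Schwarz on the $q$-sum, treating the $c_{q}(h)$ as independent, loses a factor of order $\log Q \asymp \log N$, which is fatal under the permitted range $\Delta^{2001} \leq \log N$. The necessary savings must come from splitting the sum along the factorization $q = ab$ and exploiting the multiplicativity $c_{ab}(h) = c_{a}(h)c_{b}(h)$ to treat the $a$- and $b$-averages separately, together with the prime-factor restriction; this is exactly what the form of Jutila's method in \cite{Mayank} delivers. Once this estimate is granted, the proof of the proposition is a matter of a few lines of bookkeeping.
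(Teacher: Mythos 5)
Your reduction is, in fact, the paper's: Cauchy--Schwarz with $\|\mathbf{1}(\mathcal{N}(\cdot;s,N)=0)\|_{L^2}\leq 1$ reduces the claim to an $L^2$ bound on $1-\rho$, and the Fourier expansion plus Parseval recovers exactly the quantity treated in Lemma~\ref{le:mayank}, which the paper quotes from \cite{Mayank} and applies in one line. So the structure is the same. However, two of your explanatory claims about \emph{why} this works are wrong, and they bear directly on where $\log\Delta/\Delta$ comes from.

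First, the bilinear factorization $q=ab$ plays no role in this proposition, and neither does the multiplicativity $c_{ab}(h)=c_a(h)c_b(h)$. Lemma~\ref{le:mayank} is stated for an \emph{arbitrary} set $\mathcal{Q}\subset[Q,2Q]$ with no structural hypotheses, and its proof uses only the Ramanujan-sum identity $\sum_{q\in\mathcal{Q}}c_q(\ell)=\sum_{d\mid\ell}\sum_{dq\in\mathcal{Q}}\mu(q)$, the trivial bound $O(Q/d)$ on the inner sum, and bounded divisor moments of $\sum_{d\mid\ell}1/d$. There is no splitting along $q=ab$. The bilinear structure of $\mathcal{Q}_{\Delta,N}$ is reserved entirely for Proposition~\ref{prop4}, where it feeds a Weyl-differencing step; invoking it here as the ``technical heart'' mischaracterizes the argument.

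Second, the prime-factor restriction $p\mid q\Rightarrow p>\Delta^{2001}$ is not helping you; it is the source of the $\log\Delta$ loss. Lemma~\ref{le:mayank} gives $\int_0^1|1-\widetilde{\chi}|^2\,d\alpha\ll Q^4/(\Delta^2L^2)+Q^{2+\varepsilon}/L^2$ with $L=\sum_{q\in\mathcal{Q}_{\Delta,N}}\varphi(q)$. Without the prime restriction one would have $L\asymp Q^2$ and a bound $\asymp\Delta^{-2}$. The restriction thins $\mathcal{Q}_{\Delta,N}$ by a factor $\asymp\prod_{p\leq\Delta^{2001}}(1-1/p)\asymp 1/\log\Delta$, so $L\asymp Q^2/\log\Delta$, which \emph{inflates} the bound to $(\log\Delta/\Delta)^2$; this is precisely the remark the paper makes after its proof. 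Your writeup never estimates $L$, yet this is the single piece of ``bookkeeping'' that actually produces the factor $\log\Delta$ in the statement, so it cannot be waved away.
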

At the level of precision that we require it is easier to study a smoothed version of $\mathcal{N}(x; s, N)$. 
Hence, we consider
$$
R_{\Phi, V}(x; N) := \sum_{n \in \mathbb{Z}} V \Big ( \frac{n}{N} \Big ) \Phi ( N \| \sqrt{n} - x \| )
$$
where $\Phi,V$ are smooth functions
and $\Vert x \Vert := x \mmod 1$ is the reduction of $x$ modulo $1$. The next proposition shows that we can introduce such a smoothing.

\begin{prop} \label{prop:smoothingprop}
  Let $s > 0$, $\eta \in (0, \tfrac{1}{100})$ and $N > 100 / \eta$ be given.
  Take any smooth, 
  $0 \leq \Phi, V \leq 1$
  equal to one on respectively $(\eta,s)$ and $(1,2)$ and compactly supported in respectively $(0, s + \eta)$ and $(1 - \eta, 2 + \eta)$.
  Then, for any measure $\mu$, with $0 \leq d \mu \leq d x$, 
  $$
  |\mathcal{V}(s, N; \mu) - \mathcal{V}_{\Phi, V}(N; \mu)| \leq 8 (1 + s)\eta,
  $$
  where
  $$
  \mathcal{V}_{\Phi, V}(N ; \mu) := \int_{0}^{1} \mathbf{1} (R_{\Phi, V}(x; N) = 0) d \mu(x). 
  $$
\end{prop}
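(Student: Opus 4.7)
The plan is to compare the two indicator functions $\mathbf{1}(\mathcal{N}(x;s,N) = 0)$ and $\mathbf{1}(R_{\Phi,V}(x;N) = 0)$ pointwise and show that they disagree on an $x$-set of Lebesgue measure at most $8\eta(1+s)$. Since $0 \leq d\mu \leq dx$ by hypothesis, any such Lebesgue bound transfers directly to a $\mu$-bound on $|\mathcal{V}(s,N;\mu) - \mathcal{V}_{\Phi,V}(N;\mu)|$. Setting $A := \{x \in [0,1) : \mathcal{N}(x;s,N) = 0\}$ and $B := \{x \in [0,1) : R_{\Phi,V}(x;N) = 0\}$, the plan is to estimate the Lebesgue measures $|A \setminus B|$ and $|B \setminus A|$ separately.

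The first step is to translate the two conditions into combinatorial statements about where $\sqrt{n} \mmod 1$ lands relative to $x$. Because $\Phi, V \geq 0$, the equality $R_{\Phi,V}(x;N) = 0$ holds if and only if no $n \in \mathbb{Z}$ satisfies both $V(n/N) > 0$ and $\Phi(N\|\sqrt{n}-x\|) > 0$; the support hypotheses then reformulate this as: no integer $n \in (N(1-\eta), N(2+\eta))$ has $\|\sqrt{n}-x\| \in (0, (s+\eta)/N)$. In parallel, $\mathcal{N}(x;s,N) = 0$ means no integer $n \in [N, 2N)$ has $\|\sqrt{n}-x\| \in [0, s/N)$. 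So $R_{\Phi,V}(x;N) = 0$ is a slightly weaker condition than $\mathcal{N}(x;s,N) = 0$, except near the endpoints of the $n$-range and of the $\|\sqrt{n}-x\|$-range.

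Pinpointing the discrepancy: if $x \in A \setminus B$, the offending $n$ either lies in the fringe $(N(1-\eta), N) \cup [2N, N(2+\eta))$ (at most $2\eta N + 2$ integers, each ruling out an $x$-arc of length $(s+\eta)/N$), or it lies in $[N,2N)$ with $\|\sqrt{n}-x\|$ in the boundary annulus $[s/N, (s+\eta)/N)$ (at most $N$ integers, each ruling out an $x$-arc of length $\eta/N$). A union bound, combined with $N > 100/\eta$ and $\eta < 1/100$, yields $|A \setminus B| \leq 4\eta(1+s)$. If $x \in B \setminus A$, then some $n_0 \in [N, 2N)$ realizes $\|\sqrt{n_0}-x\| \in [0, s/N)$; since $n_0$ belongs to the extended range and $\|\sqrt{n_0}-x\| < (s+\eta)/N$, the condition $x \in B$ forces $\|\sqrt{n_0}-x\| = 0$ (the one value excluded by the open interval $(0, (s+\eta)/N)$), so $B \setminus A \subseteq \{\sqrt{n} \mmod 1 : n \in [N, 2N)\}$ is a finite set of Lebesgue measure zero. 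Combining gives the bound $4\eta(1+s) \leq 8\eta(1+s)$. This is a routine smoothing comparison with no genuine obstacle; the only subtle point is the open/closed endpoint issue that produces the set $B \setminus A$, and it is harmless because that set is Lebesgue-null.
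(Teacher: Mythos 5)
Your overall strategy — compare the two indicator functions pointwise, bound the Lebesgue measure of the symmetric difference $A\triangle B$, and use $0\le d\mu\le dx$ to transfer — is the same as the paper's, and your analysis of $A\setminus B$ is correct. However, the $B\setminus A$ step contains a genuine error. You assert that the support hypotheses \emph{reformulate} $R_{\Phi,V}(x;N)=0$ as ``no integer $n\in(N(1-\eta),N(2+\eta))$ has $\|\sqrt{n}-x\|\in(0,(s+\eta)/N)$,'' but that is only one direction of an implication, not an equivalence. Since $\Phi,V\ge0$, the vanishing $R_{\Phi,V}(x;N)=0$ is equivalent to ``for every $n$, either $V(n/N)=0$ or $\Phi(N\|\sqrt{n}-x\|)=0$''; because $V$ and $\Phi$ may vanish inside their supports (e.g.\ $\Phi$ can be identically zero on $[0,\eta/2]$), this condition is strictly weaker than the combinatorial one you wrote down. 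Consequently, $B\setminus A$ need not be contained in the finite set $\{\sqrt{n}\bmod 1 : N\le n<2N\}$, and your conclusion that it is Lebesgue-null is false.

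The gap is repairable without changing the strategy: for $x\in B\setminus A$ there is some $n_0\in[N,2N)$ with $N\|\sqrt{n_0}-x\|\in[0,s)$, and since $V(n_0/N)=1$ (by $V\equiv 1$ on $(1,2)$ and continuity at the endpoint) the vanishing of that term forces $\Phi(N\|\sqrt{n_0}-x\|)=0$; because $\Phi\equiv1$ on $[\eta,s]$ by continuity, this pins $N\|\sqrt{n_0}-x\|$ to the annulus $[0,\eta)$. That annulus contributes Lebesgue measure at most $N\cdot\eta/N=\eta$ to $|B\setminus A|$, which is nonzero but still $\le\eta(1+s)$, so together with your $|A\setminus B|\le 4\eta(1+s)$ the stated bound $8(1+s)\eta$ still holds. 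This is exactly what the paper's proof does: it bounds the pointwise difference of indicators by a sum over the fringe $n\in N\mathcal{B}_{(1,2)}$ \emph{plus} a sum over $n$ with $N\|\sqrt{n}-x\|\in\mathcal{B}_{(\eta,s)}=(0,\eta)\cup(s,s+\eta)$, the inner annulus $(0,\eta)$ being precisely the term your argument drops.
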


In the next proposition we show that the difference between the $\limsup$ and the $\liminf$ of the smoothed void statistic restricted to the measure $\mu_{ N, \mathcal{Q}_{\Delta, N}}(x)$ shrinks as $\Delta$ goes to infinity. 

\begin{prop} \label{prop2}
  Let $\eta \in (0, \tfrac{1}{100})$ and $\Delta, s > 0$ be given.   
  Let $0 \leq V, \Phi \leq 1$ be smooth functions, equal to one on 
  respectively $(1,2)$ and $(\eta, s)$ and compactly supported in respectively $(1 - \eta, 2 + \eta)$ and $(0, s + \eta)$. Then, 
  \begin{align*}
  \liminf_{N \rightarrow \infty} \mathcal{V}_{\Phi, V}(N; \mu_{N , \mathcal{Q}_{\Delta, N}})
  - \limsup_{N \rightarrow \infty} \mathcal{V}_{\Phi, V} 
  (N; \mu_{N, \mathcal{Q}_{\Delta, N}})  
  = O_{\Phi, V}(\Delta^{-10}) + O(\eta).
  \end{align*}
\end{prop}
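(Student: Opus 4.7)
I plan to show that for each fixed $\Delta$ the smoothed void statistic $\mathcal{V}_{\Phi,V}(N; \mu_{N,\mathcal{Q}_{\Delta,N}})$ is close, up to $O(\Delta^{-10}) + O(\eta)$, to a quantity $L(\Delta,\Phi,V,\phi)$ that does not depend on $N$; since $\liminf_N - \limsup_N \leq 0$ always, an oscillation bound of that form is exactly what the proposition asks for. The strategy is a moment expansion combined with an asymptotic evaluation of multi-point correlations of $\sqrt{n} \mmod{1}$ sampled by $\mu_{N,\mathcal{Q}_{\Delta,N}}$.

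First, I would pass from the smoothed count $R_{\Phi,V}$ to the integer-valued count
$$
\widetilde R(x;N) := \#\bigl\{\, N \leq n < 2N :\ V(n/N)\,\Phi\bigl(N\|\sqrt{n} - x\|\bigr) > 0 \,\bigr\},
$$
at the cost of $O(\eta)$ in the void statistic (by the same $\eta$-boundary argument that proves Proposition \ref{prop:smoothingprop}). Since $\widetilde R \in \mathbb{Z}_{\geq 0}$, Bonferroni gives, for any $K \geq 1$,
$$
\mathbf{1}(\widetilde R = 0) = \sum_{k=0}^{K} (-1)^k \binom{\widetilde R}{k} + O\left(\binom{\widetilde R}{K+1}\right),
$$
so after integration against $d\mu_{N,\mathcal{Q}_{\Delta,N}}$ the void statistic becomes a finite alternating sum of the correlations
$$
C_k(N,\Delta) := \int_0^1 \binom{\widetilde R(x;N)}{k}\, d\mu_{N,\mathcal{Q}_{\Delta,N}}(x)
$$
plus a Bonferroni tail $O(C_{K+1}(N,\Delta))$.

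The technical heart of the argument is then to evaluate each $C_k(N,\Delta)$ asymptotically and prove $C_k(N,\Delta) = L_k(\Delta) + O_{k,\Phi,V}(N^{-c})$ with $L_k(\Delta)$ an $N$-independent constant. Opening the binomial into an ordered $k$-tuple sum, (re)smoothing each sharp indicator into a smooth cutoff at negligible $\eta$-cost, and Fourier-expanding $\Phi(N\|y\|) = N^{-1}\sum_{\ell} \widehat{\Phi}(\ell/N) e(\ell y)$, one is left after the $x$-integration with an arithmetic average of Ramanujan-type sums over $(a,q)$ with $q \in \mathcal{Q}_{\Delta,N}$, multiplied by a $k$-fold product of Weyl sums $\prod_{i=1}^{k} \sum_{n_i} V(n_i/N) e(\ell_i \sqrt{n_i})$. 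The bilinear shape $q = ab$ and the primality condition $p \mid ab \Rightarrow p > \Delta^{2001}$ imposed in \eqref{eq:qu} are tailored to apply Jutila's circle method (in the \cite{Mayank} formulation already used for Proposition \ref{prop1}), and a stationary-phase expansion of each $\sum_{n} V(n/N) e(\ell \sqrt n)$ cleanly separates the $N$-dependence from the arithmetic factor, delivering $L_k(\Delta)$.

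Finally, I would choose $K = K(\Delta)$ so that $\sup_N C_{K+1}(N,\Delta) \ll \Delta^{-10}$: on minor arcs of modulus $q \asymp \Delta \sqrt{N}$ the local density of $\sqrt{n} \mmod 1$ at scale $1/N$ is $O(1)$, with a quantitative saving from the large-prime condition in \eqref{eq:qu}, so $\widetilde R$ has sub-Poisson tails under $d\mu_{N,\mathcal{Q}_{\Delta,N}}$ and the moments $C_k$ decay rapidly enough in $k$ to reach any prescribed power of $\Delta^{-1}$. The main obstacle will be the circle-method evaluation at arbitrary $k$: coupling $k$ stationary-phase analyses to the arithmetic average over the bilinear set $\mathcal{Q}_{\Delta,N}$ while keeping the error uniformly $O(N^{-c})$ and the main term genuinely independent of $N$ is precisely what forces the delicate choice of exponents in \eqref{eq:qu}, and is exactly the convergence of the ``new correlation functions'' announced in the abstract.
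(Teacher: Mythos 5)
Your high-level plan, expand the indicator $\mathbf{1}(\,\cdot = 0)$ into finitely many correlation-type moments and prove each moment has a power-saving limit, matches the paper's philosophy, but your decomposition (Bonferroni inclusion-exclusion applied to an integer-valued count $\widetilde R$) genuinely differs from the paper's device, which replaces $\mathbf{1}(R = 0)$ by $\exp(-\kappa R)$ with $\kappa = 1000\log\Delta$ and then Taylor-expands the exponential. Both expansions are in principle legitimate. The genuine gap is at the step you yourself flag as the ``technical heart'': you describe evaluating $C_k(N,\Delta)$ by Fourier-expanding each $\Phi(N\|\cdot\|)$, producing $k$-fold Weyl sums, and then invoking stationary phase plus Jutila's circle method, but you do not show how to carry this out, and in particular you miss the structural reduction that makes the problem tractable. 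The paper first establishes (Proposition \ref{prop3}) that on a minor arc $a/q - \theta$ the smooth count has a \emph{closed formula} $R_{\Phi,V} = \widehat\Phi(0)\widehat V(0) + 2\widetilde R_{\Phi,V} + O(\Delta^{-A})$, where $\widetilde R_{\Phi,V}$ is a short oscillatory sum over $(u,v) \in \mathcal{B}_{a,\Delta,q}$ with only $\ll \Delta^{O(1)}$ terms, and only afterwards computes moments of this compact object (Proposition \ref{prop4}), using the bilinear factorization $q = ab$ for Weyl differencing. (The bilinear shape of $\mathcal{Q}_{\Delta,N}$ is used for Weyl differencing in that moment computation, not for Jutila's circle method, which appears only in the change of measure, Proposition \ref{prop1}.) Attempting to compute $k$-point correlations of $\sqrt{n}\,\mathrm{mod}\,1$ directly, without the closed-formula reduction, is the hard part — the paper's introduction even notes that the unrestricted higher correlations diverge — and your proposal does not say how to do it.

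A secondary gap concerns the Bonferroni tail. You propose to bound $C_{K+1}$ by appealing to ``sub-Poisson tails'' of $\widetilde R$ under $d\mu_{N,\mathcal{Q}_{\Delta,N}}$, but you neither prove this nor is it the mechanism that is actually available. What does hold — and what the paper uses in its Taylor truncation — is the \emph{deterministic} pointwise bound $0 \leq R_{\Phi,V}(x;N) \ll_{\Phi,V} \Delta^{O(1)}$ on the support of $\mu_{N,\mathcal{Q}_{\Delta,N}}$, which is again a consequence of Proposition \ref{prop3}. From it your $\widetilde R(x;N) \ll_{\Phi,V} \Delta^{O(1)}$ on the support, so $\binom{\widetilde R}{K+1}$ vanishes identically once $K+1$ exceeds this bound; you should invoke this deterministic cutoff rather than a probabilistic tail estimate. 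Finally, note that in both your approach and the paper's, $K$ (resp. the Taylor truncation length) grows with $\Delta$, so uniformity of the power saving in $k$ — at least polynomially in $k$ — is needed and should be made explicit; you acknowledge the uniformity issue but do not address how to achieve it.
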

Combining the above three propositions yields
$$
\liminf_{N \rightarrow \infty} \mathcal{V}(s, N) - \limsup_{N \rightarrow \infty} \mathcal{V}(s, N) = O_{\Phi, V}(\Delta^{-1/2}) + O((1 + s) \eta).
$$
Taking $\Delta \rightarrow \infty$ and then $\eta \rightarrow 0$ shows that the limit of $\mathcal{V}(s, N)$ exists for each $s > 0$, and thus Theorem \ref{thm:elkiesmcmullen} follows from Proposition \ref{prop1}. Notice that since $\Phi, V$ depend on $\eta$ we cannot take the limit in the opposite order ($\eta$ first, $\Delta$ later). We describe this deduction again and in more detail in Section \ref{se:proofall}.
Proposition \ref{prop2} is the core of the proof. The proof of Proposition \ref{prop2} relies on two results that we now describe. 

As hinted earlier, 
  $R_{\Phi, V}(x; N)$ admits a closed formula when $x$ lies in the ``minor arcs''. 
  
\begin{prop} \label{prop3}
  Let $N, \Delta > 10$ such that $\Delta^{2001} \leq \log N$ be given.  Let $A>10$.
  Let $0 \leq \Phi, V \leq 1$ be non-zero,
  smooth, compactly supported functions with $V$ compactly supported
 in $(1/2, 3)$. 
  Let $\theta \in \mathbb{R}$ be such that $|N \theta| \leq 1/100$.
   Then for any $(a,q) = 1$ and $q \in [\Delta \sqrt{N}, 2 \Delta \sqrt{N}]$
   such that all prime factors of $q$ are larger than $\Delta^{2001}$,
  \begin{align} \label{eq:form}
  R_{\Phi, V} \Big ( \frac{a}{q} - \theta; N \Big ) 
  = \widehat{\Phi}(0) \widehat{V}(0) & + 2 \widetilde{R}_{\Phi, V} 
  \Big ( \frac{a}{q} - \theta; N \Big ) + O_{A, \Phi, V} (\Delta^{-A}),
  \end{align}
  where
  $$
  \widetilde{R}_{\Phi, V} \Big ( \frac{a}{q} - \theta; N \Big ) 
  := \sum_{(u,v)\in \mathcal{B}_{a, \Delta, q}} 
  e \Big (  - \frac{\overline{q}^2 u^2}{4 v} + \frac{u^2}{4 v q^2} \Big ) 
  F_{\Phi, V, N \theta} 
  \Big ( \frac{v}{\sqrt{N}}, \frac{u}{q / \sqrt{N}} \Big ),
  $$
  the set $\mathcal{B}_{a, \Delta, q}$
  is defined by
  $$
  \mathcal{B}_{a, \Delta, q} :=
  \{(u,v)\in \mathbb{Z}^2: 
  1 \leq |u| \leq \Delta^{4},\,
  u \equiv 2 v a \mmod{q}
  \},
  $$
  and the weight function 
  $F_{\Phi, V, \theta}$ is given by
  $$
  F_{\Phi, V, \theta}(\xi, \eta) 
  := \int_{0}^{\infty} \widehat{\Phi}(2 \xi x) 
  e(2 \xi \theta x) x V(x^2) e \Big ( - \frac{\eta x}{2} \Big ) dx.
  $$
  Furthermore, we have the bound, 
  $$
  \widetilde{R}_{\Phi, V} \Big ( \frac{a}{q} - \theta; N \Big ) \ll_{\Phi, V} \Delta^4. 
  $$
\end{prop}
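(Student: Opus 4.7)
\emph{Proof proposal.} My plan is to derive the identity through Fourier expansion of the cutoff $\Phi(N\|\cdot\|)$, Poisson summation on the $n$-variable, and stationary-phase analysis of the resulting oscillatory integrals, followed by an arithmetic manipulation that uses the minor-arc structure of $x=a/q-\theta$.

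Since $\Phi$ has small compact support, the Fourier series $\Phi(N\|y\|)=N^{-1}\sum_{k}\widehat\Phi(k/N)\,e(ky)$ yields
\[
R_{\Phi,V}(x;N)=\frac{1}{N}\sum_{k\in\mathbb{Z}}\widehat{\Phi}(k/N)\,e(-kx)\sum_{n}V(n/N)\,e(k\sqrt{n}).
\]
The term $k=0$ gives $\widehat\Phi(0)\widehat V(0)+O_{A}(N^{-A})$ after one further application of Poisson summation to $\sum_n V(n/N)$. For $k\neq 0$, Poisson on the inner sum together with the change of variable $u=\sqrt n$ produces $\sum_{l\in\mathbb{Z}} I_{l}(k)$ with $I_{l}(k)=\int_{0}^{\infty}2u\,V(u^{2}/N)\,e(ku-lu^{2})\,du$. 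Integration by parts handles $l=0$, while for $l\neq 0$ completing the square gives $I_{l}(k)=e(k^{2}/(4l))\int_{0}^{\infty}2u\,V(u^{2}/N)\,e(-l(u-k/(2l))^{2})\,du$, a Fresnel-type integral whose saddle $u_{0}=k/(2l)$ lies in the support of $V(u^{2}/N)$ only when $kl>0$ and $|k/l|\asymp\sqrt{N}$.

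Substituting $x=a/q-\theta$, the combined phase becomes $e(k^{2}/(4l)-ka/q+k\theta)$. Splitting $k=c+qs$ with $0\leq c<q$ and applying Poisson in $s$ (justified by the rapid decay of $\widehat\Phi$), orthogonality of additive characters mod $q$ forces $c\equiv 2la\pmod{q}$; relabeling $(u,v)=(k,l)$ yields the membership $(u,v)\in\mathcal{B}_{a,\Delta,q}$. The additive reciprocity $q\bar q\equiv 1\pmod{4v}$, legitimate when $(q,4v)=1$ (guaranteed up to a sparse exceptional set by the hypothesis that every prime factor of $q$ exceeds $\Delta^{2001}$), then collapses $k^{2}/(4l)-ka/q$ to exactly $-\bar q^{2}u^{2}/(4v)+u^{2}/(4vq^{2})$. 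The surviving smooth factor, built from $\widehat\Phi(k/N)$, the weight $V$, and the residual Fresnel kernel $e(-l(u-k/(2l))^{2})$, regroups via Parseval into $F_{\Phi,V,N\theta}(v/\sqrt{N},\,u\sqrt{N}/q)$.

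Truncating to $|u|\leq\Delta^{4}$ is justified by repeated integration by parts inside $F$ using the smoothness of $\widehat\Phi$ and $V$, giving $O_{A}(\Delta^{-A})$ for larger $|u|$; the $u=0$ residue class is absorbed into the same error because of the constraint $|k|\asymp|l|\sqrt{N}$ combined with the decay of $\widehat\Phi$. The trivial bound $|\widetilde R_{\Phi,V}|\ll_{\Phi,V}\Delta^{4}$ then follows from $O(\Delta^{4})$ admissible $u$, each with $O(1)$ admissible $v$ (since $v\equiv\overline{2a}u\pmod{q}$ and $v\ll\sqrt{N}\ll q/\Delta$), and each summand of size $O(1)$. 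The hardest part will be the third paragraph: correctly tracking signs, normalizations, and Gauss-sum factors through two Poisson summations and the stationary-phase step while keeping the reciprocity identity applicable is delicate, and this is precisely where the minor-arc restrictions on $q$ enter.
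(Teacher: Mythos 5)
Your opening moves match the paper exactly: Fourier-expand $\Phi(N\|\cdot\|)$, Poisson in $n$, substitute $u=\sqrt n$, complete the square, and treat the resulting Fresnel integral (which the paper handles via Lemma~\ref{le:dist}, producing the factor $e(-\mathrm{sgn}(v)/8)/\sqrt{2|v|}$ and a further Fourier transform of $W(y)=yV(y^2)$). Your trivial bound $\ll_{\Phi,V}\Delta^4$ is also correct in spirit. The problem is in the third paragraph, which is exactly where you flagged the delicacy, and the sketch there does not go through.

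\emph{Gap 1: the arithmetic Poisson step.} You propose splitting $k=c+qs$ with $0\leq c<q$ and applying Poisson in $s$, then invoking ``orthogonality of additive characters mod $q$'' to force $c\equiv 2la\pmod q$. But the phase $k^2/(4l)$ with $k=c+qs$ contains the term $q^2s^2/(4l)$, which is a genuine quadratic phase modulo $1$: Poisson in $s$ does not reduce to orthogonality, it produces another oscillatory/Gauss-sum problem that your sketch leaves unresolved. The paper instead applies Poisson summation on the entire $\ell$-sum with the correct arithmetic period $4vq/(v,q)$, obtaining a complete generalized Gauss sum $\sum_{x\bmod 4vq/r} e(x^2/(4v)-ax/q+rux/(4vq))$. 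It is the CRT splitting of this sum (variables mod $q^\star$, mod $4v^\star$, and mod $r$) that yields, on the one hand, orthogonality in the mod-$q^\star$ variable giving $4v^\star a\equiv u\pmod{q^\star}$, and on the other, a quadratic Gauss sum in the mod-$4v^\star$ variable evaluated by Lemma~\ref{le:quadratic Gauss sums}, which forces $u$ even and produces the phase $e(-(u/2)^2\overline q^2/(4v))$. After the change $u\mapsto 2u$ this gives the congruence $u\equiv 2va\pmod q$.

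\emph{Gap 2: variable identification.} Your ``relabeling $(u,v)=(k,l)$'' cannot be right: $k$ ranges over an interval of length $\asymp N$, while $\mathcal{B}_{a,\Delta,q}$ requires $1\leq|u|\leq\Delta^4$. The variable $u$ in $\mathcal{B}_{a,\Delta,q}$ is the \emph{dual} variable created by the second Poisson summation, not the original frequency $k$. Relatedly, ``additive reciprocity collapses $k^2/(4l)-ka/q$ to exactly $-\overline q^2u^2/(4v)+u^2/(4vq^2)$'' conflates two distinct steps: the factor $\overline q^2$ arises from the Gauss-sum evaluation just described, while the extra $u^2/(4vq^2)$ is inserted at the very end as a near-trivial factor $e(u^2/(4vq^2))=1+O(\Delta^6/(vN))$; it is not an output of reciprocity.

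\emph{Gap 3: the $(v,q)>1$ case.} You wave at this with ``sparse exceptional set,'' but the paper devotes a full subsection to showing that $r=(v,q)>1$ contributes $\ll_{A,\Phi,V}\Delta^{-A}$. The argument crucially uses that all prime factors of $q$ exceed $\Delta^{2001}$, so $r>1$ forces $r\geq\Delta^{2001}$, together with the decay of $F_{\Phi,V,N\theta}$ in its second argument (which then sees frequencies $\gtrsim r/\Delta\geq\Delta^{2000}$). Without this the truncation to $\mathcal{B}_{a,\Delta,q}$ (which presupposes $(v,q)=1$ so that $\overline q\bmod 4v$ makes sense) is not justified.

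In short: the strategy is the right one, but the central arithmetic manipulation --- the second Poisson with period $4vq/r$, the CRT splitting, the quadratic Gauss sum evaluation, the passage $u\mapsto 2u$, and the treatment of $(v,q)>1$ --- is not captured by the proposed split $k=c+qs$, and as written the key claims there do not follow.
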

Some remarks are in order. 
\begin{rem}
  \phantom{a}
  \begin{enumerate}
\item[(a)] First, we claim that if
$(u,v)\in \mathcal{B}_{a, \Delta, q}$
then 
$v$ is coprime to $q$
and therefore the inverse $\overline{q}$ 
modulo $4v$ exists.
To see this notice that
if $d = (v,q)$  
then $d\mid u$ since $u \equiv 2va \mmod q $ and in particular $d \leq |u| \leq \Delta^4$. 
However since $q\in \mathcal{Q}_{\Delta,N}$ all prime 
factors of $q$ are larger than $\Delta^{2001}$, thus 
if $d > 1$ then $d \geq \Delta^{2001}$
contradicting 
$d \leq |u| \leq \Delta^{4}$. 
So, $(v,q)=1$. Trivially, we also have $u,v \neq 0$. 

\item[(b)] Furthermore, notice that the formula 
in Proposition \ref{prop3} 
gives a quick algorithm for determining $\mathcal{N}(x; s, N)$ for $x$ in the minor arcs. Trivially the sum contains $\ll \Delta^{O(1)}$ terms.
  \end{enumerate}
\end{rem}

In the next proposition we compute moments of $\tilde{R}_{\Phi, V}$ with respect to $d \mu_{ N, \mathcal{Q}_{\Delta, N}}(x)$. Without this restriction the moments would be divergent for high enough $k$. Instead thanks to the change of measure the $k$th moment is $\ll \Delta^{4 k}$. 
Finally, the bilinear structure of $\mathcal{Q}_{\Delta, N}$ is used here to obtain cancellations in certain exponential sums through a form of Weyl differencing.
\begin{prop} \label{prop4}
  Let $0 \leq \Phi, V \leq 1$ be non-zero, smooth and compactly supported functions. 
  Assume that $V$ supported in $(1/2,3)$. Let $w$ be a smooth function, compactly supported in $(-1/2,1/2)$ with $w(0) = 1$. 
  Let $\Delta, N > 10$ be such that $\Delta^{2001} < \log N$. 
  Let $\phi$ be a smooth, non-negative function, compactly supported in $[- 1 /100, 1 /100]$ and normalized
  so that $\int_{\mathbb{R}} \phi(x) dx = 1$. 
  Write
  $$
  \boldsymbol{t} = (t_1, \ldots, t_{k}) \ , \ \boldsymbol{u} := (u_1, \ldots, u_k)
  $$
  and let $\widetilde{R}_{\Phi, V}(x; N)$ be as in Proposition \ref{prop3}. 
  Then, for every fixed $k \in \mathbb{N}$, 
  we have
  \begin{align*}
 & \frac{N}{\sum_{\substack{q \in \mathcal{Q}_{\Delta, N}}} \varphi(q)} \sum_{\substack{(a,q) = 1 \\ q \in \mathcal{Q}_{\Delta, N}}} \int_{\mathbb{R}} \widetilde{R}_{\Phi, V} \Big (\frac aq - \theta; N \Big )^k \phi(N \theta ) d \theta \\ & = \frac{2}{3 \Delta^2} \int_{\mathbb{R}} \int_{\Delta}^{2 \Delta} \sum_{\substack{\boldsymbol{u}, \boldsymbol{t} \in \mathbb{Z}^k \\ \langle \boldsymbol{1}, \boldsymbol{u} \rangle = 0 = \langle \boldsymbol{u}, \boldsymbol{t} \rangle \\ \forall i :  1 \leq |u_i| \leq \Delta^{4}}} H_{\Phi, V, k, \theta, \Delta} \Big ( \frac{\boldsymbol{t}}{v}, \frac{\boldsymbol{u}}{v} \Big ) \frac{dv}{v^{k - 1}} \phi(\theta) d \theta + O_{\Phi, V, k, \varepsilon}(N^{-1/8000 + \varepsilon})
  \end{align*}
  for any $\varepsilon > 0$, and 
  where $F_{\Phi, V, \theta}$ is defined in Proposition \ref{prop3}, and
  $$
  H_{\Phi, V, k, \theta, \Delta} (\boldsymbol{t}, \boldsymbol{u}) 
  := \int_{\boldsymbol{\xi} \in \mathbb{R}^k} \prod_{i = 1}^{k} 
  F_{\Phi, V, \theta}(\xi_i, u_i) 
  e ( - \langle \boldsymbol{t}, \boldsymbol{\xi} \rangle ) 
  w(\Delta^3 \boldsymbol{\xi}) d \boldsymbol{\xi}
  $$
  where $w(\boldsymbol{\xi}) := w(\xi_1 + \ldots + \xi_k)$.
\end{prop}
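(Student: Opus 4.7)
The plan is to expand $\widetilde R_{\Phi,V}(a/q-\theta;N)^k$ as a $k$-fold sum over $(u_i,v_i)\in\mathcal{B}_{a,\Delta,q}$, Poisson-dualise each $v_i$-sum, evaluate the sum over $a$ as a Ramanujan-type sum to force $\langle\mathbf u,\mathbf t\rangle=0$, and finally extract cancellations in the sum over $q$ via the bilinear factorisation $q=ab$ to force $\sum u_i=0$ (with the claimed power-saving error). The main term emerges after approximating $\sum_{q\in\mathcal Q_{\Delta,N}}$ by $\int_\Delta^{2\Delta}dv$ at the scale $v=q/\sqrt N$.

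I would expand the $k$-th power and use the congruence $u_i\equiv 2v_ia\pmod q$ to write $v_i=\overline{2a}u_i+qm_i$ with $m_i\in\mathbb Z$. Poisson summation in each $m_i$ introduces dual variables $t_i\in\mathbb Z$, a Jacobian $\sqrt N/q=1/v$ per coordinate, the Fourier transform of $F_{\Phi,V,N\theta}$ in its first slot evaluated at $t_i/v$ (with $v:=q/\sqrt N\in[\Delta,2\Delta]$), and the residual phase $e(\overline{2a}\,u_it_i/q)$. The companion arithmetic factor $e(-\overline q^2u_i^2/(4v_i))$ is rewritten, using $\overline qq=1+4v_i\ell_i$ with $\ell_i\in\mathbb Z$, as $e(-2u_i^2\ell_i/q^2-4u_i^2v_i\ell_i^2/q^2)$; since $q^2\asymp\Delta^2N$ this is slowly varying in $m_i$ and can be absorbed into the Poisson weight with a controlled Lipschitz error. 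Collecting the residual phases and summing over $a$ with $(a,q)=1$ yields $\sum_{(a,q)=1}e(\overline{2a}\langle\mathbf u,\mathbf t\rangle/q)$, a Ramanujan sum. The Schwartz decay of $F$ in its first slot forces $|t_i|\ll v\asymp\Delta$, while $|u_i|\le\Delta^4$, so $|\langle\mathbf u,\mathbf t\rangle|\le k\Delta^5\ll q$; hence the sum vanishes unless $\langle\mathbf u,\mathbf t\rangle=0$ exactly, contributing $\varphi(q)$ and giving the first diagonal relation in the conclusion.

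With $\langle\mathbf u,\mathbf t\rangle=0$ fixed, what remains is a sum over $q\in\mathcal Q_{\Delta,N}$ of smooth factors times a residual $q$-dependent arithmetic phase. Factoring $q=ab$ with $a\in[Q^{1/1000},2Q^{1/1000}]$ and applying Cauchy--Schwarz in $b$ followed by opening the square in $a$, the inner $a$-sum becomes an incomplete Kloosterman-type sum which exhibits square-root cancellation unless $\sum_iu_i=0$. The hypothesis that every prime factor of $q$ exceeds $\Delta^{2001}$ keeps all relevant inverses coprime to the (much smaller) quantities $u_i,v_i,t_i$, so the differencing is clean; the resulting saving $O(Q^{-1/2000+\varepsilon})$ produces the stated error $O(N^{-1/8000+\varepsilon})$ after undoing Cauchy--Schwarz. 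On the diagonal $\sum u_i=0$ I approximate $\sum_{q\in\mathcal Q_{\Delta,N}}$ by $\int_\Delta^{2\Delta}dv$: Selberg--Delange-type counting of squarefree $q$ with all prime factors $>\Delta^{2001}$ combined with the normalisation $N/\sum_{q\in\mathcal Q_{\Delta,N}}\varphi(q)$ gives the prefactor $2/(3\Delta^2)$, while the $k$ Poisson Jacobians $1/v$ combine with $\varphi(q)\sim v\sqrt N$ from the $a$-sum to produce the measure $dv/v^{k-1}$. The soft cutoff $w(\Delta^3\boldsymbol\xi)$ in $H$ encodes a residual short-scale averaging that arises in the passage to the continuous limit.

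The main obstacle is the third step --- extracting a power saving for the $q$-sum that forces $\sum u_i=0$. The arithmetic phase $e(-\overline q^2u_i^2/(4v_i))$ is a nontrivial function of $q$ through the inverse $\overline q$ modulo $4v_i$, and carefully tracking its behaviour under the bilinear change $q\mapsto ab$ --- so that the Cauchy--Schwarz/Weyl-differencing step isolates $\sum u_i=0$ with the claimed square-root cancellation --- uses the full strength of the engineered structure of $\mathcal Q_{\Delta,N}$: bilinearity with exponent $1/1000$ (for the differencing length), squarefreeness (for clean multiplicative factorisations), and the all-prime-factors-$>\Delta^{2001}$ condition (for trivial coprimality of all arithmetic data).
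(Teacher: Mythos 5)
Your outline agrees with the paper's in broad strokes --- expand the $k$-th power, Poisson-dualise each $v_i$ along the progression $v_i \equiv \overline{2a} u_i \pmod q$ to produce the dual variables $\boldsymbol{t}$ and the Jacobian $1/v$ per coordinate, execute the $(a,q)=1$ sum as a Ramanujan-type sum to force $\langle\boldsymbol{u},\boldsymbol{t}\rangle=0$, exploit the bilinear factorisation $q=ab$ to force $\sum_i u_i = 0$, and trade the weighted $q$-sum for $\int_\Delta^{2\Delta}$ via Lemma~\ref{le:comput}. The gap is in your treatment of the arithmetic phase $e(-\overline{q}^{2}u_i^2/(4v_i))$, which is the crux of the proposition.

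You rewrite that phase, via $\overline{q}q=1+4v_i\ell_i$, as $e\bigl(-\tfrac{2u_i^2\ell_i}{q^2}-\tfrac{4u_i^2v_i\ell_i^2}{q^2}\bigr)$ (the remaining piece $u_i^2/(4v_iq^2)$ being cancelled by the correction built into $\widetilde R$), and claim that because $q^2\asymp\Delta^2N$ this is slowly varying in $m_i$ and absorbable with a Lipschitz error. This is false. The integer $\ell_i=(\overline{q}q-1)/(4v_i)$ is only constrained to lie in $[0,q)$; for $v_i\asymp\sqrt N$ it is typically of size $q\asymp\Delta\sqrt N$, so $4u_i^2v_i\ell_i^2/q^2$ is typically of size $\Delta^8\sqrt N$. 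Reduced modulo $1$, this oscillates arithmetically: when $v_i$ shifts by $q$, the inverse of $q$ modulo $4v_i$ --- and hence $\ell_i$ --- jumps with no smooth structure in $m_i$. This is precisely the arithmetic obstruction; it cannot be folded into the Poisson weight.

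The ingredient you are missing is Lemma~\ref{le:transform}. One parametrises the chain $u_1v_i=u_iv_1+q\ell_i$ (available because all the $(u_i,v_i)$ share the congruence $u_i\equiv 2v_ia\pmod q$) and uses the Chinese remainder theorem to collapse all $k$ phases into the single identity
$$\sum_{i\le k}\frac{\overline{q}^{2}u_i^2}{4v_i}\equiv\frac{u_1\overline{4v_1}\,\kappa}{q^2}-\frac{u_1\overline{4v_1^2}\,\eta}{q}+\sum_{i\le k}\frac{u_i^2}{4v_iq^2}\pmod 1,\qquad\kappa:=\textstyle\sum_i u_i,\ \ \eta:=\textstyle\sum_i\ell_i.$$
On the diagonal $\kappa=\eta=0$ the left side cancels against the built-in correction, producing the main term; off the diagonal the phase depends only on $v_1$ with modulus $q$ or $q^2$, which is what the Weyl differencing over $q=ab$ with shift $a^2hu_1$ is designed to beat. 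This identity also explains the cutoff $w(\Delta^3\boldsymbol\xi)$ in the stated $H_{\Phi,V,k,\theta,\Delta}$: since $u_1(v_1+\cdots+v_k)=q\eta$, the diagonal condition $\eta=0$ forces $\sum_i v_i=0$, and $w$ is the smooth detector for this constraint inserted before Poisson summation over $\boldsymbol{v}$. Your sketch never imposes $\sum_i v_i=0$ and so would not recover the claimed form of $H$.
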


\section{Preliminary Lemmas}
First, we will quantify how well the Jutila measure 
\eqref{def Jutila measure} approximates
Lebesgue measure. To that end,
an $L^2$-argument suffices
which naturally requires us to study an associated
pair correlation function. Let us detail this now.
For a finite set $\mathcal{S}\subseteq [0,1)$ of $L$ elements,
we define the (smoothed) pair correlation function 
$$
R_2(\mathcal{S},\Psi,\lambda):= 
\frac{1}{L} \sum_{\substack{s_1, s_2\in \mathcal{S}\\ s_1\neq s_2}} 
\Psi\Big(\frac{\Vert s_1 - s_2 \Vert_{\mathbb{R}/\mathbb{Z}}}
{\lambda L^{-1}}\Big)
$$
where $\Vert \cdot \Vert_{\mathbb{R}/\mathbb{Z}}$
denotes the distance to the nearest integer.
Our first lemma studies 
the pair correlation function of rough and square-free
Farey fractions;
compare \cite[Thm. 2]{boca2005correlations}.
\begin{lemma}\label{le:paircorrel}
 Let $\Delta, N \geq 10$ and set 
 $Q := \Delta \sqrt{N}$. 
  Let $\mathcal{Q}=\mathcal{Q}_{\Delta,N}$ 
  be as in \eqref{eq:qu}. Suppose $\Psi$
  is a non-negative, even, 
  smooth function that is compactly supported in 
  $[-1 / 10,1 / 10]$.
  Define $\mathcal{S}=\{a/q: 
  1\leq a < q, (a,q)=1, q\in \mathcal{Q}\}$. Then,
  \begin{equation}\label{eq:paircorrelasymp}
    R_2(\mathcal{S},\Psi,\lambda) = 
    \lambda \widehat{\Psi}(0) + O(\Delta^{-2000})  
  \end{equation}
uniformly for all $\Delta\ll \lambda \leq \Delta^{100}$ as $N \rightarrow \infty$.
\end{lemma}
\begin{proof}
To analyze $R_2(\mathcal{S},\Psi,\lambda)$,
we decompose the off-diagonal in $\mathcal{S}^2$ 
into the sets $\mathcal{S}^2(g)$ consisting of all
$(s_1,s_2)=(a_1/q_1,a_2/q_2)\in  
\mathcal{S}^2$ with $g=(q_1,q_2)$ and $s_1\neq s_2$.
Then, 
\begin{equation}\label{eq gcd decomp}
R_2(\mathcal{S},\Psi,\lambda)
= 
\frac{1}{L} 
\sum_{g\geq 1} \sum_{\substack{(\frac{a_1}{q_1},
\frac{a_2}{q_2})\in \mathcal{S}^2(g)}} 
\Psi\Big(\frac{\Vert \frac{a_1}{q_1} - 
\frac{a_2}{q_2} \Vert_{\mathbb{R}/\mathbb{Z}}}
{\lambda L^{-1}}\Big).
\end{equation}
We claim $\mathcal{S}^2(g)$ contributes zero to \eqref{eq gcd decomp} if $g>1$. To verify this,
we will show that there
\begin{equation}\label{eq lower bound on fractions}
\frac{\Vert \frac{a_1}{q_1} - 
\frac{a_2}{q_2} \Vert_{\mathbb{R}/\mathbb{Z}}}
{\lambda L^{-1}} > \Delta^{1000}
\end{equation}
is inevitably outside of 
$\mathrm{supp}(\Psi)\subseteq [-1/10,1/10]$.
Let $(a_1/q_1,a_2/q_2)\in \mathcal{S}^2(g)$. We start 
by writing $q_1=q_1' g$ and $q_2=gq_2'$ so that $(q_1',q_2')=1$.
Notice
$$
\frac{a_1}{q_1} - 
\frac{a_2}{q_2}
= 
\frac{a_1q_2' - a_2 q_1'}{g q_1'q_2'}.
$$
The left-hand side is non-vanishing (as we
are on the off-diagonal $a_1/q_1\neq a_2/q_2$).
Hence, the integer $a_1q_2' - a_2 q_1'$ is non-zero
ensuring 
$\vert a_1q_2' - a_2 q_1'\vert \geq 1$ and
$$
\Big \vert 
\frac{a_1}{q_1} - 
\frac{a_2}{q_2}
\Big \vert \geq \frac{1}{g q_1'q_2'}.
$$
By the roughness stipulation in \eqref{eq:qu}, 
if $g>1$ then $g> \Delta^{2001}$.
Note that $L\leq \Delta^5 N$ and that $q_{1}', q_{2}' \leq 2 Q / g$.
Thus, 
$$
\frac{1}{g q_1'q_2'} \geq \frac{g}{4 Q^{2}} \geq \frac{\Delta^{2001}}{4 \Delta^{2} N}
> \frac{\Delta^{1800}}{L}.
$$
As $\lambda \leq \Delta^{100}$,
we deduce \eqref{eq lower bound on fractions}.
Consequently, \eqref{eq gcd decomp} simplifies to
$$
R_2(\mathcal{S},\Psi,\lambda)
= 
\frac{1}{L}  \sum_{\substack{(\frac{a_1}{q_1},
\frac{a_2}{q_2})\in \mathcal{S}^2(1)}} 
\Psi\Bigg(\frac{\big \Vert \frac{a_1}{q_1} - 
\frac{a_2}{q_2} \big\Vert_{\mathbb{R}/\mathbb{Z}}}
{\lambda L^{-1}}\Bigg).
$$
Fix co-prime
$q_1,q_2\in \mathcal{Q}$.
Let $\mathfrak{q}:=q_1q_2$.
Given an integer $m\geq 1$, we use the abbreviation $\mathbb{Z}_{m}^*:=\{1\leq h\leq m: (h,m)=1\}$.
The map
$(a_1,a_2)\mapsto a_1 q_2 - q_1 a_2 \,\mathrm{mod}\, \mathfrak{q}$
is a bijection 
between $\mathbb{Z}_{q_1}^*\times \mathbb{Z}_{q_2}^*$
and $\mathbb{Z}_{\mathfrak{q}}^*$. Moreover if 
$ a_1 q_2 - q_1 a_2 \,\mathrm{mod}\, \mathfrak{q} = h$, then
$$
\frac{1}{\lambda L^{-1}}
\Big \Vert \frac{a_1}{q_1} - 
\frac{a_2}{q_2} \Big\Vert_{\mathbb{R}/\mathbb{Z}}
= 
\min\Big( \frac{L}{\lambda}\cdot \frac{h}{\mathfrak{q}},
 \frac{L}{\lambda}\cdot  \frac{\mathfrak{q}-h}{\mathfrak{q}} \Big).
$$
Let $\mathcal{I}:=\mathrm{supp}(\Psi) \cap \mathbb{R}_{\geq 0}$. Next, we 
observe that
$$
\Psi \Big ( \frac{L}{\lambda} \cdot \Big \| \frac{h}{\mathfrak{q}} \Big \|_{\mathbb{R} / \mathbb{Z}} \Big ) \neq 0
$$
if and only if,
$$
\min \Big (h, \mathfrak{q} - h \Big ) \in \frac{\mathfrak{q} \lambda}{L} \cdot \mathcal{I}.
$$
Thus,
$$
\Bigg\{h \in \mathbb{Z}_\mathfrak{q}: 
\Psi\Big( \frac{L}{\lambda}
\Big \Vert \frac{h}{\mathfrak{q}} 
\Big\Vert_{\mathbb{R}/\mathbb{Z}}\Big)
\neq 0
\Bigg\}
=
\Big(\frac{\mathfrak{q} \lambda}{L}
\mathcal{I}\cup 
\big(\mathfrak{q} - 
\frac{\mathfrak{q} \lambda}{L} \mathcal{I},
\mathfrak{q}\big)\Big)
\cap \mathbb{Z}_\mathfrak{q}.
$$
Each of the two intervals on the right-hand side has length comparable to
$\lambda/5 \leq \Delta^{100}$.
Using their location, we can add co-primality conditions to
both sides of the equation:
$$
\Bigg\{h \in \mathbb{Z}_\mathfrak{q}^*: 
\Psi\Big( \frac{L}{\lambda} \Big \Vert  
\frac{h}{\mathfrak{q}} \Big\Vert_{\mathbb{R}/\mathbb{Z}}\Big)
\neq 0
\Bigg\}
=
\Big(\frac{\mathfrak{q}\lambda}{L}
\mathcal{I}\cup 
\big(\mathfrak{q} - 
\frac{\mathfrak{q} \lambda}{L}\mathcal{I},
\mathfrak{q}\big)\Big)
\cap \mathbb{Z}^*_\mathfrak{q}.
$$
To proceed, it is helpful to have more information 
on $L$ at hand. To this end, we write 
$$
L = \sum_{q \in \mathcal{Q}} \varphi(q) = \sum_{q\in \mathcal{Q}} \sum_{d\mid q} \frac{q}{d} \mu (d)
=\sum_{1 \leq  d \leq 2Q} 
 \frac{\mu (d)}{d}
\sum_{
\substack{q\in \mathcal{Q}\\
d\mid q}}  q
= 
\sum_{q\in \mathcal{Q}}  q
+ 
E
$$
where $Q = \Delta \sqrt{N}$ and the contribution $E$ (from the range $d>1$) is readily estimated via
$$
E= 
\sum_{\Delta^{2001} \leq  d \leq 2Q} 
 \frac{\mu (d)}{d}
\sum_{
\substack{q\in \mathcal{Q}\\
d\mid q}}  q\ll 
\sum_{\Delta^{2001} \leq d \leq 2Q} 
 \frac{1}{d}
\Big( \frac{Q}{d} + 1 \Big) Q
\ll \frac{Q^2}{\Delta^{2001}}.
$$
As
$\#\mathcal{Q}\asymp \frac{Q}{\log \Delta}$,
we infer 
$\sum_{q\in \mathcal{Q}}  q \asymp \frac{Q^2}{\log \Delta}
\asymp \frac{\Delta^2 N}{\log \Delta} $.
Consequently,
\begin{equation}\label{eq: approx to L}
L= (1+ O(\Delta^{-2000})) \sum_{q\in \mathcal{Q}}  q.
\end{equation}
By $\lambda \gg \Delta$, we infer from
\eqref{eq: approx to L} that
$\lambda \mathfrak{q}/L\gg \Delta$.
Poisson summation implies
\begin{align*}
\sum_{\substack{h\in \frac{\mathfrak{q} \lambda}{L}\mathcal{I}\\
(h,\mathfrak{q})=1}}
\Psi\Big( \frac{L}{\lambda}\Big \Vert  
\frac{h}{\mathfrak{q}} \Big\Vert_{\mathbb{R}/\mathbb{Z}}\Big)
= 
\frac{1}{2}
\sum_{h\in \mathbb{Z}}
\Psi\Big(\frac{Lh}{\lambda\mathfrak{q}} \Big)
= \frac{1}{2} 
\frac{\lambda\mathfrak{q}}{L}
\widehat{\Psi}(0)
+ O_A(\Delta^{-A})
\end{align*}
for any $A>0$.
In the same way,
\begin{align*}
\sum_{\substack{h\in (\mathfrak{q}- 
\frac{\mathfrak{q} \lambda}{L}\mathcal{I}, \mathfrak{q})\\
(h,\mathfrak{q})=1}}
\Psi\Big(\frac{L}{\lambda} \Big \Vert  
\frac{h}{\mathfrak{q}} \Big\Vert_{\mathbb{R}/\mathbb{Z}}\Big)
& = \frac{1}{2} 
\frac{\lambda\mathfrak{q}}{L}
\widehat{\Psi}(0)
+ O_A(\Delta^{-A}).
\end{align*}
The upshot is that we can approximately compute the summations over the 
$a_i$ as follows
$$
\sum_{\substack{(a_i,q_i)=1\\
i=1,2}} 
\Psi\Bigg(\frac{\big \Vert \frac{a_1}{q_1} - 
\frac{a_2}{q_2} \big\Vert_{\mathbb{R}/\mathbb{Z}}}
{\lambda L^{-1}}\Bigg) = 
\frac{\lambda \mathfrak{q}}{L} \widehat{\Psi}(0) + O_A(\Delta^{-A}).
$$
As a result, 
\begin{equation}\label{eq circle method interm}
R_2(\mathcal{S},\Psi,\lambda)
= 
\lambda \widehat{\Psi}(0) 
\sum_{\substack{(q_1,q_2)\in \mathcal{Q}^2\\
(q_1,q_2)=1}}
\frac{q_1q_2}{L^2} + O_A(\Delta^{-A}).
\end{equation}
We write
\begin{align}\label{eq interm q sum}
\sum_{\substack{(q_1,q_2)\in \mathcal{Q}^2\\
(q_1,q_2)=1}} \frac{q_1q_2}{L^2}
= 
\Bigg(
\sum_{\substack{q\in \mathcal{Q}}} \frac{q}{L}
\Bigg)^2
+ O\Bigg( 
\sum_{\Delta^{2001}< g < 2Q}
\sum_{\substack{(q_1,q_2)\in \mathcal{Q}^2\\
(q_1,q_2)=g}} \frac{q_1q_2}{L^2}
\Bigg).
\end{align}
For each $g<2Q$, there are 
$O(\frac{Q^2}{g^2})$
many $(q_1,q_2)\in \mathcal{Q}^2$ 
with $(q_1,q_2)=g$. Thus, 
$$
\sum_{\Delta^{2001}< g < 2Q}
\sum_{\substack{(q_1,q_2)\in \mathcal{Q}^2\\
(q_1,q_2)=g}} \frac{q_1q_2}{L^2}
\ll 
\frac{1}{\Delta^{2000}}.
$$
By \eqref{eq: approx to L} we know
$\sum_{\substack{q\in \mathcal{Q}}} \frac{q}{L}
= 1 + O(\Delta^{-2000})$.
Using this information in \eqref{eq interm q sum} 
yields
$$
\sum_{\substack{(q_1,q_2)\in \mathcal{Q}^2\\
(q_1,q_2)=1}} \frac{q_1q_2}{L^2}
= 
1
+ O(\Delta^{-2000}).
$$
In view of \eqref{eq circle method interm}, we conclude 
$R_2(\mathcal{S},\Psi,\lambda)= 
\lambda \widehat{\Psi}(0) + O(\Delta^{-2000})$.
\end{proof}
Next, we need a refined version of Jutila's circle method,
see \cite{Mayank}. 
\begin{lemma} \label{le:mayank}
  Let $\Delta, N \geq 10$, 
  and $\mathcal{Q}=\mathcal{Q}_{\Delta,N}$ be as in \eqref{eq:qu}.
  Suppose $\phi$
  is a non-negative, even, smooth function that is
  compactly supported in $[-1 / 100,1 / 100]$ with 
  $\int_{\mathbb{R}} \phi (x) \,dx = 1$. Set
  $$
  \widetilde{\chi}(\alpha) := \frac{N}{L}
  \sum_{\substack{(a,q) = 1 \\ q \in \mathcal{Q}}} 
  \phi \Big ( N 
  \Big ( \alpha - \frac{a}{q} \Big ) \Big ) \ , 
  \ L := \sum_{q \in \mathcal{Q}} \varphi(q). 
  $$
  Then,
  $$
  \int_{0}^{1} |1 - \widetilde{\chi}(\alpha)|^2 
  d \alpha 
  \ll \frac{1}{\Delta}.
  $$
\end{lemma}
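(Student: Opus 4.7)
The plan is to use Parseval on the circle and Poisson summation in the Fourier variable to reduce the problem to counting near-coincidences of Farey-type fractions.

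A direct computation gives
\[
\widehat{\widetilde\chi}(k) = \int_0^1 \widetilde\chi(\alpha)\,e(-k\alpha)\,d\alpha = \frac{\widehat\phi(k/N)}{L}\sum_{q\in\mathcal{Q}}c_q(k),
\]
where $c_q(k):=\sum_{(a,q)=1}e(-ka/q)$ is the Ramanujan sum, so that $\widehat{\widetilde\chi}(0)=1$; Parseval then yields
\[
\int_0^1|1-\widetilde\chi|^2\,d\alpha = \frac{1}{L^2}\sum_{k\neq 0}|\widehat\phi(k/N)|^2|S(k)|^2,\qquad S(k):=\sum_{q\in\mathcal{Q}}c_q(k).
\]
Opening $|S(k)|^2$ as a double sum over coprime pairs $(q_i,a_i)$ and applying Poisson summation in $k$ through the identity
\[
\sum_{k\in\mathbb Z}|\widehat\phi(k/N)|^2 e(k\beta) = N\sum_{m\in\mathbb Z}\rho\bigl(N(m-\beta)\bigr),\qquad \rho(\eta):=\int_{\mathbb R}\phi(x)\phi(x+\eta)\,dx,
\]
where $\rho$ is smooth, non-negative, supported in $[-1/50,1/50]$, and satisfies $\int\rho = 1$, rewrites the spectral sum physically as $N\sum_{q_1,q_2,\,(a_i,q_i)=1}\rho(N(a_1/q_1 - a_2/q_2))$, up to negligible wrap-around.

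The $q_1=q_2$ contribution collapses to the strict diagonal $a_1=a_2$, since for any fixed $q\leq 2Q\ll N$ distinct reduced fractions of denominator $q$ are spaced $\geq 1/q > 1/(50N)$ apart and hence outside the support of $\rho(N\cdot)$; this contributes exactly $L\rho(0)$. For $q_1\neq q_2$, the substitution $m:=a_1q_2-a_2q_1$ constrains $0<|m|\leq q_1 q_2/(50N)\ll \Delta^2$, and the equation $a_1q_2-a_2q_1=m$ has at most $\gcd(q_1,q_2)$ solutions in $[1,q_1]\times[1,q_2]$. A M\"obius inversion enforcing $(a_i,q_i)=1$, combined with a Riemann-sum approximation of the resulting sum over $m$ (valid with super-polynomial error because $\rho\in C_c^\infty$), gives for each pair $q_1\neq q_2$
\[
\sum_{(a_i,q_i)=1}\rho\bigl(N(a_1/q_1 - a_2/q_2)\bigr) = \frac{\varphi(q_1)\varphi(q_2)}{N} + (\text{error}),
\]
with the error controlled by divisor-type bounds on the M\"obius tail. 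Summing over $q_1\neq q_2$, multiplying by $N$, and subtracting the Parseval $k=0$ mass $L^2$ leaves
\[
L^2\int_0^1|1-\widetilde\chi|^2\,d\alpha = NL\rho(0) - \sum_q\varphi(q)^2 + O_\varepsilon(Q^{2+\varepsilon}) \ll \frac{Q^4}{\Delta^2} + Q^{2+\varepsilon},
\]
using $NL \leq NQ|\mathcal{Q}|\leq NQ^2 = Q^4/\Delta^2$. Dividing by $L^2$ completes the proof.

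The main technical obstacle is extracting the main term $\varphi(q_1)\varphi(q_2)/N$ in the off-diagonal sum with small enough error: it is the ensuing cancellation with the Parseval subtraction $L^2$ that produces the sharp $Q^4/(\Delta^2 L^2)$ saving, as opposed to the naive $Q^4/L^2$ bound one obtains by merely bounding $|S(k)|^2$ in absolute value. Achieving this requires balancing the Riemann-sum approximation (which converges super-polynomially in the regime where the step is small, i.e.\ when divisor products $d_1 d_2\leq \Delta^2$) against divisor-type tail bounds for the complementary regime $d_1 d_2 > \Delta^2$; this is the refinement that \cite{Mayank} brings to Jutila's original $L^2$ method.
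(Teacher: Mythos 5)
Your first move (Parseval, giving $L^2\int_0^1|1-\widetilde{\chi}|^2\,d\alpha = \sum_{k\neq 0}|\widehat\phi(k/N)|^2|S(k)|^2$ with $S(k)=\sum_q c_q(k)$) matches the paper's, but after that you take a genuinely different route. The paper stays in frequency space: it expands the Ramanujan sums, bounds $|S(k)|\ll_\varepsilon Q\,k^\varepsilon$ by the triangle inequality, and estimates the resulting divisor sum over $k\ll N$ directly --- a few lines, pure upper bound, no main term isolated. You instead apply Poisson summation once more, this time in $k$, recasting the spectral sum as the physical count $N\sum_{q_1,q_2}\sum_{(a_i,q_i)=1}\rho\bigl(N(a_1/q_1-a_2/q_2)\bigr)$, and then count near-coincidences of Farey fractions, observing that the off-diagonal main term $\approx L^2-\sum_q\varphi(q)^2$ cancels the Parseval $k=0$ mass $L^2$. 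This dual view is more structural and would yield an asymptotic, not merely an upper bound, at the cost of a more delicate error analysis. Two places need care in your sketch: (i) the diagonal collapse to $a_1=a_2$ uses the well-spacing $1/q > 1/(50N)$ for $q\leq 2Q$, i.e.\ $\Delta<25\sqrt{N}$, which is not among the stated hypotheses (harmless wherever the bound is nontrivial, and trivially met in the paper's application where $\Delta^{2001}\leq\log N$); (ii) the per-pair claim $\sum_{(a_i,q_i)=1}\rho(\cdot)=\varphi(q_1)\varphi(q_2)/N+(\text{small error})$ cannot hold uniformly: for $\gcd(q_1,q_2)\gg\Delta^2$ no nonzero $m=a_1q_2-a_2q_1$ lies in the support of $\rho(N\cdot/(q_1q_2))$, so the true contribution is $0$ and the discrepancy is the entire main term $\asymp\Delta^2$. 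The $O_\varepsilon(Q^{2+\varepsilon})$ error thus has to come from the small/large-gcd dichotomy you gesture at in the closing paragraph (super-polynomial Riemann-sum error when $\gcd$ is small, a crude count $\ll\Delta^2\cdot(Q/d)^2$ summed over large $d$), which is stated but not carried out. With those caveats the structure is sound, and both routes reach the stated bound via $NL\ll NQ^2=Q^4/\Delta^2$.
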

\begin{proof}
Put $\mathcal{S}:=\{a/q: 
  1\leq a < q, (a,q)=1, q\in \mathcal{Q}\}$.
By Poisson summation, 
$$\widetilde{\chi}(\alpha) = 
\frac{1}{L}\sum_{\ell \in \mathbb{Z}} 
\widehat{\phi}\Big ( \frac{\ell}{N} \Big ) 
c(\ell) e( - \ell \alpha) 
\quad \mathrm{where}
\quad 
c(\ell) := \sum_{s\in \mathcal{S}} 
e ( \ell s ).
$$
Since $\phi$ is even, $\widehat{\phi}$ is even and real-valued.
The zero mode, $\ell=0$, contributes one to the right-hand side.
Let $X:=\int_{0}^{1} | 1 - \widetilde{\chi}(\alpha) |^2 
d \alpha $. Plancherel's theorem implies 
\begin{align*}
X & =  \frac{1}{L^2} 
\sum_{\ell \neq 0}  
\Big(\widehat{\phi} \Big(\frac{\ell}{N}\Big) \Big)^2
\vert c(\ell)\vert^2.
\end{align*}
To proceed, we consider the even function $\Psi:=\phi*\phi$.
Its relevance is that $\widehat{\Psi}(\xi)=(\widehat{\phi}(\xi))^2$.
Expanding the square and 
interchanging the order of summation yields 
$$
X = 
\frac{1}{L^2}  \sum_{s_1,s_2 \in \mathcal{S}} 
\sum_{\ell \neq 0}
\widehat{\Psi}\Big(\frac{\ell}{N}\Big)
e ( \ell (s_1 - s_2)). 
$$
Let $\lambda :=L/N$, and notice $L= \#\mathcal{S}$.
The contribution of the diagonal $s_1=s_2$  to $X$ is
$$
\frac{\#\mathcal{S}}{L^2}  
\sum_{\ell \neq 0}
\widehat{\Psi}\Big(\frac{\ell}{N}\Big)
\ll \frac{1}{\lambda}.
$$
From the off-diagonal, 
we add and subtract $\widehat{\Psi}(0)=1$.
Poisson summation over $\ell$ implies
\begin{equation}\label{eq X in terms of pair correl}
X = \frac{R_2(\mathcal{S},\Psi,\lambda)}{\lambda} - 1 + 
O_A\Big(\frac{1}{\lambda} + \Delta^{-A}\Big).
\end{equation}
From 
\eqref{eq: approx to L}
we see $\lambda \gg \Delta^2/\log \Delta$.
Applying Lemma \ref{le:paircorrel}
completes the proof.
\end{proof}

We will frequently appeal to the following form of the Poisson summation formula. 

  \begin{lemma}[Poisson summation] \label{le:poisson} 
  Let $q \geq 1$.
    Let $f$ be a Schwartz function and $K$ be any 
  $m$-periodic function.
    Then,
    $$ \sum_{n \in \mathbb{Z}} K(n) f(n) = 
    \sum_{\ell \in \mathbb{Z}} 
    \widehat{f} \Big ( \frac{\ell}{m} \Big ) 
    (\mathcal{F}_m^{-1}K)(\ell) $$
    where
    $$ (\mathcal{F}_m^{-1} K)(\ell) 
    := \frac{1}{m}\sum_{x \mmod{m}} 
    K(x) e \Big ( \frac{\ell x}{m} \Big ) $$
    is the inverse of the discrete Fourier transform modulo $m$. 
  \end{lemma}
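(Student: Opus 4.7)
The plan is to reduce the statement to the standard Poisson summation formula by splitting the integer sum into residue classes modulo $q$ and using the periodicity of $K$.

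First I would partition $\mathbb{Z}$ into arithmetic progressions modulo $q$. Since $K$ is $q$-periodic, $K(n) = K(x)$ whenever $n \equiv x \mmod{q}$, so
\[
\sum_{n \in \mathbb{Z}} K(n) f(n) = \sum_{x \mmod{q}} K(x) \sum_{m \in \mathbb{Z}} f(x + mq).
\]
Both sums converge absolutely because $f$ is Schwartz, which justifies the rearrangement.

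Next, I would apply classical Poisson summation to the inner sum. Writing $g_x(y) := f(x+qy)$, the Fourier transform of $g_x$ is $\widehat{g_x}(\xi) = q^{-1}\widehat{f}(\xi/q)e(x\xi/q)$, so standard Poisson summation gives
\[
\sum_{m \in \mathbb{Z}} f(x+mq) = \frac{1}{q}\sum_{\ell \in \mathbb{Z}} \widehat{f}\Big(\frac{\ell}{q}\Big) e\Big(\frac{\ell x}{q}\Big).
\]
Substituting and interchanging the finite sum over $x$ with the sum over $\ell$ (again justified by Schwartz decay of $\widehat{f}$) yields
\[
\sum_{n \in \mathbb{Z}} K(n) f(n) = \sum_{\ell \in \mathbb{Z}} \widehat{f}\Big(\frac{\ell}{q}\Big) \cdot \frac{1}{q}\sum_{x \mmod{q}} K(x) e\Big(\frac{\ell x}{q}\Big) = \sum_{\ell \in \mathbb{Z}} \widehat{f}\Big(\frac{\ell}{q}\Big) (\mathcal{F}_q^{-1}K)(\ell),
\]
which matches the claimed identity (interpreting the $m$ in the statement as a typo for $q$).

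There is no real obstacle here: the result is a direct consequence of Poisson summation combined with the periodicity of $K$. The only minor care needed is the absolute convergence argument that licences the interchange of the finite sum over residues with the summation over $\ell$; this follows at once from $f$ being Schwartz, since $\widehat{f}$ decays faster than any polynomial and so $\sum_{\ell} |\widehat{f}(\ell/q)| < \infty$.
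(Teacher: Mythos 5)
Your proof is correct, and it is the standard argument: partition $\mathbb{Z}$ into residue classes modulo $q$, apply classical Poisson summation to each shifted-and-dilated copy of $f$, then recombine to get the discrete Fourier coefficients of $K$. The paper states this lemma without proof, treating it as standard, so there is nothing to compare against; your derivation, including the absolute-convergence justification for the interchange and the observation that the $m$'s in the statement are typos for $q$, is exactly right.
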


  In order to use Poisson summation effectively
   we also need two auxiliary results.

\begin{lemma}\label{le:dist}
  Let $v \neq 0$ and let $f : \mathbb{R} \rightarrow \mathbb{R}$ be a smooth compactly support Schwarz function. Then, 
  $$
  \int_{\mathbb{R}} e(-v y^2) f(y) dy = \frac{e(-\tfrac {\mathrm{sgn}(v)}{8})}{\sqrt{2 |v|}} \int_{\mathbb{R}} e \Big (  \frac{\xi^2}{4 v} \Big ) \widehat{f}(\xi) d \xi.
  $$
\end{lemma}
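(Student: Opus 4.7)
The plan is to identify the right-hand side as the result of applying Parseval's identity (or rather the Fourier duality pairing $\langle K, f \rangle = \langle \widehat{K}, \widehat{f}\,\rangle$ up to sign conventions) to the quadratic-phase kernel $K(y) := e(-v y^{2})$ and the Schwartz function $f$. Thus the entire content of the lemma reduces to computing the (distributional) Fourier transform of $K$, namely
\[
\widehat{K}(\xi) \;=\; \frac{e(-\mathrm{sgn}(v)/8)}{\sqrt{2|v|}}\, e\!\left(\frac{\xi^{2}}{4v}\right).
\]
Once this identity is established, Lemma \ref{le:dist} will follow from one line of Parseval/duality, using that $\widehat{K}$ is even so that orientation of the Fourier variable is immaterial.

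The key computation is the Fourier transform of $K$. I would formally complete the square: for $v>0$,
\[
\int_{\mathbb{R}} e(-v y^{2} - \xi y)\, dy
\;=\; e\!\left(\tfrac{\xi^{2}}{4v}\right)\!\!\int_{\mathbb{R}} e\!\left(-v\!\left(y + \tfrac{\xi}{2v}\right)^{2}\right) dy
\;=\; e\!\left(\tfrac{\xi^{2}}{4v}\right)\!\!\int_{\mathbb{R}} e(-v z^{2})\, dz,
\]
and then use the classical Fresnel evaluation $\int_{\mathbb{R}} e(-v z^{2}) dz = e(-1/8)/\sqrt{2v}$ for $v>0$; the case $v<0$ follows either by conjugation or by the analogous completion of the square, yielding the stated factor $e(-\mathrm{sgn}(v)/8)/\sqrt{2|v|}$.

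Because $K \notin L^{1}(\mathbb{R})$, the Fresnel integral above is only conditionally convergent, so the exchange of the $y$ and $\xi$ integrals in the application of Parseval is the main technical point. I would handle this by the standard regularization trick: replace $v$ by $v - i\varepsilon$ (with $\varepsilon>0$), so that $K_{\varepsilon}(y) := e^{-2\pi \varepsilon y^{2}} e(-v y^{2})$ is Schwartz. For $K_{\varepsilon}$ all integrals are absolutely convergent, one computes $\widehat{K_{\varepsilon}}$ by a literal Gaussian integration, and Parseval applies verbatim to give the identity of the lemma with $v$ replaced by $v - i\varepsilon$. Letting $\varepsilon \to 0^{+}$, the left-hand side converges by dominated convergence against $f$, while on the right-hand side the rapid decay of $\widehat{f}$ (since $f$ is Schwartz) combined with uniform boundedness of the oscillating exponential factor lets us pass to the limit by dominated convergence as well.

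The only real obstacle is the bookkeeping around the branch of the square root and the $\mathrm{sgn}(v)$ in the prefactor, which corresponds to choosing the correct branch as $\varepsilon \downarrow 0$ from $\sqrt{-2\pi i(v - i\varepsilon)}$; once the branch is tracked carefully this yields precisely $e(-\mathrm{sgn}(v)/8)/\sqrt{2|v|}$, completing the argument.
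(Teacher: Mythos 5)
Your proposal is correct and is essentially the same argument as the paper's: regularize the quadratic-phase kernel by a Gaussian factor, compute the Fourier transform of the regularized kernel by completing the square, apply Plancherel/Parseval, and pass to the limit $\varepsilon\downarrow 0$, with the sign/branch of $\sqrt{2\pi i v}$ producing the $e(-\mathrm{sgn}(v)/8)$ factor. The only cosmetic difference is that the paper inserts $e^{-\varepsilon y^2}e^{\varepsilon y^2}$ so the left side is literally unchanged for every $\varepsilon$, whereas you let the regularized left side converge to the true one by dominated convergence; both are fine.
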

\begin{proof}
  First assume $v > 0$. Pick an $\varepsilon > 0$. 
  We apply Plancherel to
  $
  \int_{\mathbb{R}} e(-v y^2) e^{-\varepsilon y^2} e^{\varepsilon y^2} f(y) dy.
  $
  By completing the square we find
 \begin{align*}
   \int_{\mathbb{R}} e(- v y^2) e^{-\varepsilon y^2} e(- \xi y) dy & = e^{\frac{(2\pi i\xi)^2}{4 (\varepsilon + 2 \pi i v)}} \int_{\mathbb{R}} e^{- (\varepsilon + 2\pi i v) \cdot ( y + \frac{2\pi i\xi}{2\varepsilon + 4\pi i v}  )^2} dy \\ & =
  e \Big ( \frac{2 \pi i \xi^2}{4 (\varepsilon + 2 \pi i v)} \Big ) \frac{\sqrt{\pi}}{\sqrt{\varepsilon + 2 \pi i v}}.
  \end{align*}
  Letting $\varepsilon \downarrow 0$ implies the claim. 
  To prove the result for $v < 0$ it suffices to take the conjugate on both sides of the equation that 
  we just established in the case $v > 0$. 
\end{proof}

Next, we record a classical result on Gauss sums
which is the discrete analogue of Lemma \ref{le:dist}. 
\begin{lemma}[Evaluating quadratic Gauss sums]\label{le:quadratic Gauss sums}
  For integer $v > 0$ and $u \in \mathbb{Z}$ we have
  \[ 
\mathcal{G}(u; 4v) := \sum_{0\leq x <4v} 
e \Big ( \frac{x^2+ux}{4v}\Big) = 
 \sqrt{2} \sqrt{4 v} e \Big ( \frac{1}{8} \Big ) 
 e \Big ( - \frac{(u/2)^2}{4 v} \Big ) \mathbf{1}_{2\mathbb{Z}}(u).
\]
\end{lemma}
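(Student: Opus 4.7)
The plan is to split into cases according to the parity of $u$, and in each case reduce the calculation to a shifted complete Gauss sum.

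First, suppose $u$ is odd. I would show that the sum vanishes by a symmetry argument. Perform the substitution $x \mapsto x + 2v$ on the summation variable (this is a permutation of $\mathbb{Z}/4v\mathbb{Z}$). A direct expansion gives
\[
(x+2v)^2 + u(x+2v) = x^2 + ux + 4vx + 4v^2 + 2uv \equiv x^2 + ux + 2uv \pmod{4v},
\]
so that $\mathcal{G}(u;4v) = e(u/2)\,\mathcal{G}(u;4v)$. When $u$ is odd, $e(u/2) = -1$, forcing $\mathcal{G}(u;4v) = 0$, which matches the indicator $\mathbf{1}_{2\mathbb{Z}}(u)$ on the right-hand side.

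Next, suppose $u = 2m$ is even. Here I complete the square: $x^2 + ux = (x+m)^2 - m^2$. Reindexing by $y = x + m \pmod{4v}$ (again, a bijection of $\mathbb{Z}/4v\mathbb{Z}$), the factor $e(-m^2/(4v))$ comes out of the sum, and I am left with
\[
\mathcal{G}(2m;4v) = e\!\left(-\frac{m^2}{4v}\right) \sum_{0 \leq y < 4v} e\!\left(\frac{y^2}{4v}\right).
\]
Since $m = u/2$, the prefactor is exactly $e(-(u/2)^2/(4v))$ as desired. It remains to evaluate the standard quadratic Gauss sum for the modulus $N = 4v$.

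Finally, I would evaluate $\sum_{0 \leq y < 4v} e(y^2/(4v))$ as the classical quadratic Gauss sum at a modulus divisible by $4$, whose value is $(1+i)\sqrt{4v}$. Since $1 + i = \sqrt{2}\,e(1/8)$, this yields the prefactor $\sqrt{2}\sqrt{4v}\,e(1/8)$ in the statement. I expect this last step to be the main obstacle: not in difficulty, but in reproducing the exact constant $\sqrt{2}\,e(1/8)$ without sign errors. One route is to quote the theorem of Gauss directly; another, in the spirit of the paper, is to derive it as the discrete analogue of Lemma \ref{le:dist} by applying Poisson summation to a Schwartz smoothing of $\mathbf{1}_{[0,4v)}$ and using the Fresnel integral formula already proved there. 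Combining the three cases produces the stated closed form.
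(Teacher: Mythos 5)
Your proposal is correct, and the even case is handled identically to the paper: complete the square to pull out $e(-(u/2)^2/(4v))$, then reduce to the classical value $\mathcal{G}(0;4v) = (1+i)\sqrt{4v}$, which both you and the paper quote (the paper from \cite[Theorem 3.4]{IwaniecKowalski}). Where you genuinely diverge is the odd case. The paper isolates the $2$-part of $4v$: it picks $\nu \geq 2$ with $4v/2^\nu$ odd, observes that $x^2 + ux$ with $u$ odd runs over even residues modulo $2^\nu$, so $\mathcal{G}(u;2^\nu)=0$ by geometric summation, and then invokes multiplicativity of Gauss sums to conclude $\mathcal{G}(u;4v)=0$. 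Your argument is more direct and self-contained: the substitution $x \mapsto x + 2v$ multiplies the sum by $e(u/2) = (-1)^u = -1$, forcing the sum to vanish, with no appeal to multiplicativity or to the structure of the $2$-part. This is a cleaner route; what the paper's approach buys, if anything, is that it is phrased in terms of the same machinery (multiplicativity, Chinese remainder theorem) that recurs elsewhere in the paper, but for this isolated lemma your symmetry argument is shorter and avoids a nontrivial quoted fact. Your aside about rederiving $\mathcal{G}(0;4v)$ via Poisson summation applied to a smoothed indicator, as a discrete analogue of Lemma~\ref{le:dist}, is a nice observation but not needed, and the paper simply quotes the standard evaluation.
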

\begin{proof}
First, we suppose that $u$ is even. 
Completing the square yields
$$\mathcal{G}(u; 4v) = e (-(u/2)^2/(4 v)) \mathcal{G}(0; 4v).$$
Thanks to \cite[Theorem 3.4]{IwaniecKowalski} we know
$\mathcal{G}(0; 4v)=(1 + i) \sqrt{4v}$.
Notice $(1 + i) = \sqrt{2} e(\tfrac 18)$.
Now let $u$ be odd. Choose $\nu \geq 2$ 
so that $4v/2^{\nu}$ is an odd integer.
Then $x^2+ux$ runs through all even residue classes modulo $2^\nu$ exactly twice. By geometric summation,
$\mathcal{G}(u; 2^\nu) =0$.
By multiplicativity 
$\mathcal{G}(u; 4v) =0$.
\end{proof}

We need the following auxiliary results on generalized Gauss sums
\[
\mathcal{G}(a,b;c):=\sum_{x\,\mathrm{mod}\,c}e \Big (\frac{ax^{2}+bx}{c} \Big ).
\]
If the leading coefficient $a=1$, then we simply write $\mathcal{G}(b;c):=\mathcal{G}(1,b;c)$.
Later-on we need:
\begin{lemma}\label{le:Gauss sums vanish}
If $d:=\gcd(a,c)>1$, then $\mathcal{G}(a,b;c)=0$ as long as $d\nmid b$.
\end{lemma}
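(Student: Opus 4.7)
The plan is to use the standard trick of translating the summation variable by $c/d$ (which is an integer since $d\mid c$) and exploiting that the shift leaves the complete residue system $\{x \mmod c\}$ unchanged while multiplying the phase by a non-trivial $d$-th root of unity when $d \nmid b$.

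More concretely, first I would make the substitution $x \mapsto x + c/d$ inside
\[
\mathcal{G}(a,b;c) = \sum_{x \mmod c} e\!\left(\frac{ax^2 + bx}{c}\right),
\]
which is legal because $x + c/d$ runs over a complete residue system modulo $c$ as $x$ does. Expanding the new numerator gives
\[
a(x + c/d)^2 + b(x + c/d)
= ax^2 + bx + \frac{2ac}{d}x + \frac{ac^2}{d^2} + \frac{bc}{d}.
\]
Dividing by $c$, the middle two extra terms are $2(a/d)x$ and $(a/d)(c/d)$, both integers because $d\mid a$ and $d\mid c$; the last extra term contributes $b/d$. Hence after the substitution
\[
\mathcal{G}(a,b;c) = e(b/d)\,\mathcal{G}(a,b;c).
\]

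The second and final step is to observe that if $d \nmid b$, then $e(b/d) \neq 1$, so the identity $(1 - e(b/d))\,\mathcal{G}(a,b;c) = 0$ forces $\mathcal{G}(a,b;c) = 0$, as required. There is no real obstacle here; the only thing to double-check is the integrality of $2ac/(dc) = 2a/d$ and $ac^2/(d^2 c) = ac/d^2$, which both follow immediately from $d = \gcd(a,c)$.
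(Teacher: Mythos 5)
Your proof is correct and uses essentially the same idea as the paper's: both hinge on the fact that shifting $x$ by $c/d$ multiplies the summand by the nontrivial root of unity $e(b/d)$. The paper realizes this by decomposing $x = jc'+r$ and summing the geometric series $\sum_{0\le j<d}e(bj/d)=0$, whereas you condense it into the single translation-invariance identity $\mathcal{G}(a,b;c)=e(b/d)\,\mathcal{G}(a,b;c)$, a slightly slicker packaging of the same mechanism.
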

\begin{proof}
Write $a=da'$ , and $c=dc'$. We parameterize $0\leq x<c$ as $x=jc'+r$
with $0\leq j<\frac{c}{c'}=d$ and $0\leq r<c'$. Thus 
\begin{align*}
\mathcal{G}(a,b;c) & =\sum_{0\leq j<d}\sum_{0\leq r<c'}e \Big (\frac{a'(jc'+r)^{2}}{c'}+\frac{b (j c' + r)}{c} \Big )\\
 & =\sum_{0\leq j<d}\sum_{0\leq r<c'}e \Big (\frac{a'r^{2}}{c'}+\frac{b j}{d}+\frac{b r}{c}\Big ).
\end{align*}
Interchanging the order of summation and using that $d\nmid b$, we
see that the right hand side vanishes.
\end{proof}

Throughout we will need a good understanding of the decay of the function $F_{\Phi, V, \theta}$. The subsequent lemma will be often used. 

\begin{lemma}\label{le: decay bounds} 
  Let $0 \leq V \leq 1$ be a smooth function compactly supported in $(1/2, 3)$ and $0 \leq \Phi \leq 1$ a smooth function with compact support.
  Let
  $$
  F_{\Phi, V, \theta}(\xi, \eta) 
  := \int_{0}^{\infty} 
  \widehat{\Phi}(2 \xi x) 
  e(2 \xi \theta x) x V(x^2) 
  e \Big ( - \frac{\eta x}{2} \Big ) dx.  
  $$
  We have, for any $A > 10$ and $\ell \geq 0$,  
  $$
  \frac{\partial^{\ell}}{\partial^{\ell} \xi} F_{\Phi, V, \theta}(\xi, \eta) \ll_{A, \ell, \Phi} \frac{1 + |\theta|^{\ell}}{1 + |\xi|^{A}}
  $$
  and for $|\eta| > 50 |\xi \theta|$, 
  $$
  F_{\Phi, V, \theta}(\xi, \eta) \ll_{A, \Phi, V} \frac{1 + |\xi|^A}{1 + |\eta|^{A}}.
  $$
  Finally, we have the trivial bound, 
  $$
  \frac{\partial^{\ell}}{\partial^{\ell} \xi} \frac{\partial^v}{\partial^v \eta} F_{\Phi, V, \theta}(\xi, \eta) \ll_{\Phi, V, \ell, v} 1 + |\theta|^{\ell}.
  $$
\end{lemma}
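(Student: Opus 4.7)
The plan is to establish all three bounds by elementary manipulations of the defining integral, exploiting that $V(x^2)$ restricts the $x$-integration to the fixed compact interval $(1/\sqrt{2},\sqrt{3})$ and that $\widehat{\Phi}$, being Schwartz, satisfies $|y|^j |\widehat{\Phi}^{(j)}(y)| \ll_{N,j} (1+|y|)^{-N}$ for every $N, j \geq 0$.

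For the first bound I would differentiate under the integral sign $\ell$ times in $\xi$ and apply the Leibniz rule. Each differentiation acts either on $\widehat{\Phi}(2\xi x)$, producing $2x \widehat{\Phi}'(2\xi x)$, or on $e(2\xi\theta x)$, producing $4\pi i \theta x \cdot e(2\xi\theta x)$. Expanding yields a sum of terms of the shape
\[
(2x)^{k_1}\widehat{\Phi}^{(k_1)}(2\xi x)\cdot (4\pi i\theta x)^{k_2} e(2\xi\theta x)\cdot x V(x^2)
\]
with $k_1+k_2 = \ell$. The Schwartz bound applied with $N = A+k_1$ gives $|\widehat{\Phi}^{(k_1)}(2\xi x)| \ll_{A,k_1} (1+|\xi|)^{-A}$ uniformly on the $x$-support, while $|(4\pi i \theta x)^{k_2}| \ll_\ell |\theta|^{\ell}$ there; integrating over a bounded interval produces the claimed $(1+|\theta|^{\ell})/(1+|\xi|^A)$ bound.

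For the second bound I would combine the two exponentials into the single phase $e((2\xi\theta - \eta/2) x)$ and integrate by parts $A$ times in $x$ using the operator $\frac{1}{2\pi i(2\xi\theta-\eta/2)}\frac{d}{dx}$. The hypothesis $|\eta| > 50|\xi\theta|$ guarantees $|2\xi\theta - \eta/2| \geq |\eta|/2 - |\eta|/25 \gg |\eta|$, so each application yields a factor $|\eta|^{-1}$. The boundary terms vanish thanks to the compact support of $V(x^2)$ in $(0,\infty)$, and the $A$-fold $x$-derivative of the remaining amplitude $\widehat{\Phi}(2\xi x)\cdot x V(x^2)$ is uniformly bounded on the support: derivatives of $xV(x^2)$ are bounded, and each $x$-derivative of $\widehat{\Phi}(2\xi x)$ contributes $(2\xi)\widehat{\Phi}^{(\cdot)}(2\xi x)$, which is $O(1)$ by the Schwartz bound. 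This gives $F \ll_A |\eta|^{-A}$; combined with the trivial bound $F \ll 1$ it yields $F \ll_A (1+|\eta|)^{-A}$, which implies the stated estimate a fortiori via $(1+|\eta|)^A \geq 1 + |\eta|^A$. For the third bound the $\eta$-derivatives only act on $e(-\eta x/2)$, contributing the bounded factor $(-\pi i x)^v$ on the support, and the $\xi$-derivatives are handled exactly as in the first bound, producing the factor $1+|\theta|^{\ell}$.

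The only delicate point, which I anticipate as the main obstacle, is the second estimate: naively, each integration by parts produces a factor $|\xi|$ from the chain rule on $\widehat{\Phi}(2\xi x)$, which would spoil the $|\eta|^{-A}$ decay when $|\xi|$ is large. The key observation is that these factors are precisely tamed by the Schwartz decay of $\widehat{\Phi}$: the combination $(2\xi)^j\widehat{\Phi}^{(j)}(2\xi x)$ is uniformly $O(1)$ on the $x$-support by taking $N = j$ in the Schwartz bound. Making this bookkeeping precise, together with checking that the pointwise bound on the differentiated amplitude remains independent of $\xi$, is the step that requires care.
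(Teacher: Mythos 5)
Your argument is correct and follows essentially the same route as the paper's terse proof: for the first and third bounds you use the localization $x \asymp 1$ coming from the support of $V(x^2)$ together with Schwartz decay of $\widehat{\Phi}$, and for the second bound you integrate the constant phase $e\bigl((2\xi\theta - \eta/2)x\bigr)$ by parts in $x$. Your bookkeeping for the second bound is in fact slightly sharper than what the paper records (the factors $(2\xi)^j\widehat{\Phi}^{(j)}(2\xi x)$ are tamed by the Schwartz bound so the numerator $1+|\xi|^A$ can be dropped), but this is a mild refinement of, not a departure from, the paper's argument.
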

\begin{proof}
The first bound follows by noticing that $V(x)$ localizes $x \asymp 1$, and thus, inside the integral, 
$$
|\widehat{\Phi}(2 \xi x)| \ll_{A, \Phi} \frac{1}{1 + |\xi|^{A}}
$$
for any given $A > 10$. The second bound follows from integrating by parts the phase $e(2 \xi \theta x - \eta x / 2)$. This yields, 
$$
|F_{\Phi, V, \theta}(\xi, \eta)| \ll_{A, \Phi, V} \frac{1 + |\xi|^{A}}{1 + |2 \xi \theta - \eta|^A}
$$
the result then follows from the assumption that $|\eta| > 50 |\xi \theta|$. 
\end{proof}

Finally, we will need to understand averages over $q \in \mathcal{Q}_{\Delta, N}$ weighted by $\varphi(q)$. This is the content of the next Lemma. 
\begin{lemma} \label{le:comput} 
  Let $\Delta, N > 10$ with $\Delta^{2001} \leq \log N$. We
  define, as in \eqref{eq:qu}, 
  $$
  \mathcal{Q}_{\Delta, N} := \Big \{ a b \in [Q, 2Q]: Q^{1/1000} \leq a \leq 2 Q^{1/1000}, p | a b \implies p > \Delta^{2001}, \mu^2(a b) = 1 \Big \}
  $$
  with $Q := \Delta \sqrt{N}$. Then, for any $\varepsilon \in (0, 1)$, 
$$
\sum_{\substack{q \in \mathcal{Q}_{\Delta, N} \\ Q \leq q \leq (1 + \varepsilon) Q}} \frac{\varphi(q)}{q} = \varepsilon \sum_{q \in \mathcal{Q}_{\Delta, N}} \frac{\varphi(q)}{q} + O(\sqrt{Q}). 
$$
and 
$$
\sum_{q \in \mathcal{Q}_{\Delta, N}} \varphi(q) = \frac{3}{2} \cdot \Delta \sqrt{N} \sum_{q \in \mathcal{Q}_{\Delta, N}} \frac{\varphi(q)}{q}  + O(\Delta Q^{3/2}).
$$
\end{lemma}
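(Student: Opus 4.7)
The plan is to prove the second identity as a consequence of the first via Abel summation, and to establish the first via the bilinear factorization $q=ab$ encoded in $\mathcal{Q}_{\Delta,N}$ combined with a Perron-type estimate. Setting $T:=\sum_{q\in\mathcal{Q}_{\Delta,N}}\varphi(q)/q$ and $S(t):=\sum_{q\in\mathcal{Q}_{\Delta,N},\,Q\le q\le t}\varphi(q)/q$, the first identity (applied with $\varepsilon=(t-Q)/Q$) reads $S(t)=\tfrac{t-Q}{Q}T+O(\sqrt{Q})$ uniformly in $t\in[Q,2Q]$. Abel summation will then give
$$\sum_{q\in\mathcal{Q}_{\Delta,N}}\varphi(q)=2Q\,T-\int_Q^{2Q}S(t)\,dt=2QT-\frac{QT}{2}+O(Q^{3/2})=\frac{3}{2}\Delta\sqrt{N}\,T+O(Q^{3/2}),$$
which implies the second identity, since $O(Q^{3/2})\le O(\Delta Q^{3/2})$.

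For the first identity I would exploit the bilinear structure directly. Set $z:=\Delta^{2001}$ and $P(z):=\prod_{p\le z}p$, and for each admissible $a$ (squarefree, $a\in[Q^{1/1000},2Q^{1/1000}]$, $(a,P(z))=1$) define
$$\widetilde{\Sigma}_a(B_1,B_2):=\sum_{\substack{b\in[B_1,B_2]\\ \mu^2(b)=1\\ (b,aP(z))=1}}\frac{\varphi(b)}{b}.$$
The bilinear decomposition yields, for $X\in[Q,2Q]$,
$$\sum_{\substack{q\in\mathcal{Q}_{\Delta,N}\\ Q\le q\le X}}\frac{\varphi(q)}{q}=\sum_{a\text{ admissible}}\frac{\varphi(a)}{a}\,\widetilde{\Sigma}_a\!\Big(\frac{Q}{a},\frac{X}{a}\Big).$$
The generating Dirichlet series
$$F_a(s):=\prod_{p>z,\,p\nmid a}\bigl(1+(1-p^{-1})p^{-s}\bigr)$$
factors as $\zeta(s)G_a(s)$ with $G_a$ holomorphic and bounded in a standard zero-free region for $\zeta$. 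Perron's formula (or, equivalently, the fundamental lemma of sieve theory, applicable since $\log z\le\log\log N\ll\log B_1$ under the hypothesis $\Delta^{2001}\le\log N$) will then yield an estimate of the form $\widetilde{\Sigma}_a(B_1,B_2)=c_a(B_2-B_1)+O(B_2\exp(-c\sqrt{\log B_1}))$ with $c_a:=\mathrm{Res}_{s=1}F_a(s)>0$. Applying this to compare $[Q/a,(1+\varepsilon)Q/a]$ with $[Q/a,2Q/a]$ will give the proportionality $\varepsilon$ with error $\ll(Q/a)\exp(-c\sqrt{\log Q})$; summing over the $\ll Q^{1/1000}$ admissible values of $a$ produces total error $\ll Q\exp(-c\sqrt{\log Q})\ll\sqrt{Q}$ for $N$ large, establishing the first identity.

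The hardest part will be obtaining a sufficiently strong analytic estimate for $\widetilde{\Sigma}_a$. A naive M\"obius inversion approach introduces unacceptable $2^{\omega(aP(z))}$-type factors, which are of size $\gg N^{c/\log\log N}$ and swamp the target error $\sqrt{Q}\asymp N^{1/4}$ after summation over $a$. The factorization $F_a(s)=\zeta(s)G_a(s)$ with $G_a$ regular near $s=1$ circumvents this by isolating the ``$\zeta$-like'' pole, after which the classical zero-free region for $\zeta$ delivers enough saving to survive summation over the admissible $a$.
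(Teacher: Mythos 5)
Your Abel-summation derivation of the second identity from the first matches the paper exactly, and the bilinear split $q=ab$ is also the paper's opening move. The divergence — and the fatal flaw — is in how the inner sum over $b$ is handled. Your stated error bound from Perron plus a zero-free region, $\widetilde{\Sigma}_a(B_1,B_2)=c_a(B_2-B_1)+O\big(B_2\exp(-c\sqrt{\log B_1})\big)$, yields a total error of size $Q\exp(-c\sqrt{\log Q})$, and your claim that this is $\ll\sqrt{Q}$ is false: for any fixed $c>0$ and $\delta>0$ one has $\exp(-c\sqrt{\log Q})\gg Q^{-\delta}$ as $Q\to\infty$, so $Q\exp(-c\sqrt{\log Q})\gg Q^{1-\delta}\gg\sqrt{Q}$ — the inequality points the wrong way, and increasingly so as $N$ grows. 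The zero-free region (and likewise the basic form of the fundamental lemma of sieve theory you invoke) gives only a sub-polynomial saving, whereas the target $O(\sqrt{Q})$ is a genuine power saving; the argument as written cannot close this gap.

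The paper avoids asymptotically evaluating $\widetilde{\Sigma}_a$ at all. After M\"obius inversion it writes the inner constraint as $\sum_{d\mid b}f^\star(d)$ for a multiplicative $f^\star$ with $f^\star(p)=-1$ and $f^\star(p^k)=0$ ($k\ge2$) at primes $p\mid a$ or $p\le\Delta^{2001}$, but $f^\star(p)=-1/p$, $f^\star(p^2)=-1+1/p$, $f^\star(p^k)=0$ ($k\ge3$) at the remaining primes. It then uses the elementary observation that, for each fixed $a,d$, the lattice counts $\sum_{Q\le adb\le(1+\varepsilon)Q}1$ and $\varepsilon\sum_{Q\le adb\le2Q}1$ agree to $O(1)$, so the total error is controlled by $\sum_{d\le 2Q/a}|f^\star(d)|$. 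This is not the naive $2^{\omega}$-style blow-up you rightly worried about: because $|f^\star(p)|=1/p$ at the large primes and the small primes only contribute squarefree divisors of modest density (by Rankin's trick, since $\Delta^{2001}\le\log N$), one gets $\sum_{d\le D}|f^\star(d)|\ll D^{1/2+o(1)}$, and after summing over $a\asymp Q^{1/1000}$ the total error is $\ll Q^{1/2-1/2000+o(1)}$. That is the power saving your route lacks; to repair your argument you would need to replace the Perron/zero-free-region step by something delivering a comparable power saving (e.g.\ the paper's direct comparison of lattice counts, or a much more carefully optimized version of the fundamental lemma exploiting that $z=\Delta^{2001}\le\log N$, which you hint at but do not carry out).
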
 
\begin{proof}
  By definition of $\mathcal{Q}_{\Delta, N}$,  
  \begin{align} \label{eq:secondmove}
  \sum_{\substack{q \in \mathcal{Q}_{\Delta, N} \\ Q \leq q \leq (1 + \varepsilon) Q}} \frac{\varphi(q)}{q} & = \sum_{\substack{Q^{1/1000} \leq a \leq 2 Q^{1/1000} \\ p | a \implies p > \Delta^{2001}}} \frac{\mu^2(a)\varphi(a)}{a} \sum_{\substack{(a,b) = 1 \\ Q \leq a b \leq (1 + \varepsilon) Q \\ p | b \implies p > \Delta^{2001}}}\frac{\mu^2(b)\varphi(b)}{b}.
  \end{align}
  By M\"{o}bius inversion, 
  $$
  \mathbf{1} \Big ( p | b \implies p \nmid a , p > \Delta^{2001} \Big ) \frac{\mu^2(b) \varphi(b)}{b} = \sum_{d | b} f^{\star}(d)
  $$
  with $f^{\star}$ a multiplicative function such that $f^{\star}(p) = -1$, $f^{\star}(p^k) = 0$, $k \geq 2$ for $p | a$ or $p \leq \Delta^{2001}$ and $f^{\star}(p) = - \frac{1}{p}$, $f^{\star}(p^2) = -1 + \frac{1}{p}$, $f^{\star}(p^k) = 0$, $k \geq 3$ otherwise. Therefore, 
  \begin{align} \label{eq:firstmove}
  \sum_{\substack{(a,b) = 1 \\ Q \leq a b \leq (1 + \varepsilon) Q \\ p | b \implies p > \Delta^{2001}}} \frac{\mu^2(b)\varphi(b)}{b} & = \sum_{d} f^{\star}(d) \Big ( \sum_{Q \leq a d b \leq (1 + \varepsilon)Q } 1 \Big ) \\ \nonumber & = \sum_{d} f^{\star}(d) \Big ( \varepsilon \sum_{Q \leq a d b \leq 2Q} 1 \Big ) + O(\sqrt{Q}) \\ \nonumber & = \varepsilon \sum_{\substack{(a,b) = 1 \\ Q \leq a b \leq 2 Q \\ p | b \implies p > \Delta^{2001}}} \frac{\mu^2(b)\varphi(b)}{b} + O(\sqrt{Q}).
  \end{align}
  Combining \eqref{eq:secondmove} and \eqref{eq:firstmove} we obtain the first claim. The second claim follows by integration by parts, since, 
  \begin{align*}
  \sum_{q \in \mathcal{Q}_{\Delta, N}} \varphi(q) & =  \int_{1}^{2} (\Delta \sqrt{N} u) d \Big ( \sum_{\substack{q \in \mathcal{Q}_{\Delta, N} \\ q \in [\Delta \sqrt{N}, u \Delta \sqrt{N}]}} \frac{\varphi(q)}{q} \Big ) \\ & = \Delta \sqrt{N} \sum_{q \in \mathcal{Q}_{\Delta, N}} \frac{\varphi(q)}{q} \int_{1}^{2} u du + O(\Delta Q^{3/2})
  \end{align*}
  as needed. 
  
\end{proof}
\section{Proof of Proposition \ref{prop1}}

For a smooth function $\phi$ compactly supported in $[-1 / 100, 1 / 100]$ let 
$$
\widetilde{\chi}(\alpha) := 
\frac{N}{L}
  \sum_{\substack{(a,q) = 1 \\ q \in \mathcal{Q}_{\Delta, N}}} 
  \phi \Big ( N 
  \Big ( \alpha - \frac{a}{q} \Big ) \Big ) \ , 
  \ L := \sum_{q \in \mathcal{Q}_{\Delta, N}} \varphi(q). 
  $$
By Lemma \ref{le:mayank} 
\begin{align*}
\Big | \int_{0}^{1} (1 - \widetilde{\chi}(\alpha) ) 
\mathbf{1} ( \mathcal{N}(\alpha; s, N) = 0 ) 
d \alpha \Big | \leq \Big ( \int_{0}^{1} |1 - \widetilde{\chi}(\alpha) |^2 d \alpha \Big )^{1/2}
\ll \frac{1}{\Delta^{1/2}}.
\end{align*}
concluding the proof. 

\section{Proof of Proposition \ref{prop3}}

    Without loss of generality we can assume that $N$ is sufficiently large with respect to the support of $\Phi$ so that $\Phi(x) = 0$ for all $x \geq N$. Recall also that $\Phi(x)  = 0$ for $x < 0$. We start by opening $\Phi(N \| \cdot \|)$ into a Fourier series (recall that $\| x \| := x \mmod{1}$) thus
    $$
    R_{\Phi, V} \Big (\frac{a}{q} - \theta; N \Big ) 
    = \frac{1}{N} \sum_{\ell \in \mathbb{Z}} 
    \widehat{\Phi} \Big ( \frac{\ell}{N} \Big ) 
    e \Big ( - \frac{\ell a}{q} \Big ) 
    e(\ell \theta)
    \sum_{n \in \mathbb{Z}} e(\ell \sqrt{n}) 
    V \Big ( \frac{n}{N} \Big ).
    $$
    We will now transform this sum by a double application of
    Poisson summation.
    \subsubsection{The sum over $n$}    
    Throughout, let $A>1$ be an arbitrarily large constant. 
    We seek to evaluate
    $$
    S(\ell) = \sum_{n \in \mathbb{Z}} e(\ell \sqrt{n}) V \Big ( \frac{n}{N} \Big ),
    $$
    uniformly in $\ell$. 
    For $\ell = 0$ we find by Poisson summation
    $
    S(0) = N \widehat{V}(0) + 
    O_{V, A}(N^{-A}).
    $
    Consider now $\ell \neq 0$. Lemma \ref{le:poisson} yields
    $$
    S(\ell) = \sum_{v \in \mathbb{Z}} I(\ell, v)
    \,\, \mathrm{where} \,\,
    I(\ell,v):=\int_{\mathbb{R}} e(\ell \sqrt{y} - v y) V \Big ( \frac{y}{N} \Big ) dy. 
    $$
    In the case $v = 0$ we see $I(\ell, 0) \ll_{A} N^{-A}$. 
    Consider now $v\neq 0$. Integration by parts shows 
    $I(\ell,v) \ll_{A} N^{-A}$ if 
    $\mathrm{sgn}(v) \neq \mathrm{sgn}(\ell)$.
    Suppose $\mathrm{sgn}(v) = \mathrm{sgn}(\ell)$.  
    Let $$
    W(y) := V(y^2) y.
    $$
    By a change of variables (i.e. $y \mapsto y^2$) and completing the square,
    \begin{align} \label{eq:equation}
      I(\ell,v)
      = 2 \sqrt{N} e \Big ( \frac{\ell^2}{4 v} \Big ) 
      \int_{\mathbb{R}} e(- v y^2) W \Big ( \frac{y + \frac{\ell}{2 v}}{\sqrt{N}} \Big ) d y.
    \end{align}
    Applying Lemma \ref{le:dist}, 
    combined with a change of variables
    (i.e. $\xi \mapsto \xi/\sqrt{N}$), implies
    \begin{equation} \label{eq:equationz}
    \int_{\mathbb{R}} e(- v y^2) 
    W \Big ( \frac{y + \frac{\ell}{2 v}}{\sqrt{N}} \Big ) dy 
    =  \frac{e(-\tfrac {\text{sgn}(v)}{8})}{\sqrt{2 |v|}} 
    \int_{\mathbb{R}} e \Big ( \frac{\xi^2}{4 v N} \Big ) 
    e \Big ( \frac{\ell \xi}{2 v \sqrt{N}} \Big ) \widehat{W}(\xi) d \xi.
    \end{equation}
    Repeated integration by parts gives $|\widehat{W}(\xi)| \ll_{V} (1 + |\xi|^4)^{-1}$
    and therefore, 
    $$
     \int_{\mathbb{R}} \xi^2 |\widehat{W}(\xi)| d \xi
     \ll_{V} 1.
    $$
     Using the expansion,
     $$
     e \Big ( \frac{\xi^2}{4 v N} \Big ) = 1 + O \Big ( \frac{\xi^2}{4 |v| N} \Big )
     $$
     (valid uniformly in all parameters)
     we infer from \eqref{eq:equationz} that
\begin{equation} 
I(\ell,v) =  
e \Big (\frac{\ell^2}{4 v} - \frac{\text{sgn}(v)}{8} \Big ) 
\sqrt{\frac{2 N}{|v|}} 
\Big ( W \Big ( \frac{\ell}{2 v \sqrt{N}} \Big ) + O_{V}(|v|^{-1} N^{-1} ) \Big ) .
\end{equation} 
We have therefore obtained for $\ell \neq 0$,
$$
S(\ell) = \sqrt{2 N} 
\sum_{\substack{v \neq 0 \\ \text{sgn}(\ell / v) = 1}} 
\frac{e(- \tfrac{\mathrm{sgn}(v)}{8})}{\sqrt{|v|}} 
e \Big( \frac{\ell^2}{4 v} \Big)
W \Big ( \frac{\ell}{2 v \sqrt{N}} \Big ) + 
O_{V} (N^{-1/2}).
$$
In the case $\ell = 0$,
$$
S(0) = N \widehat{V}(0) + O_{V,A}(N^{-A}).
$$
\subsubsection{Sum over $\ell$}
For a non-zero integer $v$, we let
\begin{equation} \label{eq:start}
\Sigma(v) := \frac{1}{N} \sum_{\substack{\ell \in \mathbb{Z} \\
\mathrm{sgn}(\ell/v) = 1}} 
\widehat{\Phi} \Big ( \frac{\ell}{N} \Big ) e \Big ( \frac{\ell^2}{4 v} \Big ) 
e \Big ( - \frac{\ell a}{q} \Big )
e(\ell \theta) W \Big ( \frac{\ell}{2 v \sqrt{N}} \Big ).
\end{equation}
Notice that $\Sigma(-v) = \overline{\Sigma(v)}$. It follows from this and our previous computation that, 
\begin{align} \nonumber
R_{\Phi, V} \Big ( \frac{a}{q} - \theta; N \Big ) 
& = \widehat{\Phi} (0) \widehat{V}(0)
+ \sqrt{2N} \sum_{v \neq 0}  e ( -\tfrac{\mathrm{sgn}(v)}{8} )
\frac{\Sigma(v)}{\sqrt{|v|}} 
+ O_{V}(N^{-1/2}) \\ \label{eq:masterz} & 
= \widehat{\Phi}(0) \widehat{V}(0)
+ 2 \sqrt{2 N} \cdot \mathrm{Re} 
\Big ( \sum_{v > 0} \frac{e( - \tfrac 18 )}
{\sqrt{v}} \Sigma(v) \Big ) + O_{V}(N^{-1/2})
\end{align}
so that we can assume $v > 0$.
Let $r := (v,q)$.
Lemma \ref{le:poisson} implies that $\Sigma(v)$ is equal to 
\begin{align*}
\frac{r}{4v q N} \sum_{u \in \mathbb{Z}} 
& \sum_{x \mmod{4v q/ r}} 
e \Big ( \frac{x^2}{4 v} - \frac{a x}{q} 
+ \frac{r u x}{4 v q}  \Big ) \int_{0}^{\infty} 
\widehat{\Phi} \Big ( \frac{y}{N} \Big ) 
 e ( \theta y) 
 W \Big ( \frac{y}{2v \sqrt{N}} \Big ) 
 e \Big ( - \frac{r u y}{4 v q} \Big ) dy.
\end{align*}
Notice that the integral is from $(0, \infty)$ rather than $(-\infty, \infty)$ because 
we sum over \textit{positive} $\ell$. In particular, to execute the sum only over positive $\ell$ we implicitly replace the weight function $W(x)$ by $W(x) \mathbf{1}_{x > 0}$. The latter is a smooth function because $W$ is compactly supported away from $0$.  
By a change of variables $y / (2 v \sqrt{N}) \mapsto y$, and recalling that $W(x) = x V(x^2)$, we see that 
$$\int_{0}^{\infty} 
\widehat{\Phi} \Big ( \frac{y}{N} \Big ) 
 e ( \theta y) 
 W \Big ( \frac{y}{2v \sqrt{N}} \Big ) 
 e \Big ( - \frac{r u y}{4 v q} \Big ) dy 
 = 2 v \sqrt{N} F_{\Phi, V, N\theta} 
\Big ( \frac{v}{\sqrt{N}}, \frac{r u}{q / \sqrt{N}} \Big ).$$
We now analyze the exponential sum. 
Let $r = (v,q)$ and $v^{\star} = v / r$ and $q^{\star} = q / r$. 
For any non-zero 
and pair-wise coprime $i,j,h\in \mathbb{Z}$, 
the Chinese remainder theorem implies 
\begin{equation}\label{eq Chinese remainder}
\frac{1}{ij} \equiv 
\frac{\overline{i}}{j} + \frac{\overline{j}}{i} \mmod{1},
\quad \mathrm{and}\quad
\frac{1}{ijh} \equiv 
\frac{\overline{jh}}{i} + \frac{\overline{ih}}{j} +
\frac{\overline{ij}}{h} \mmod{1}.
\end{equation}
We write $x$ 
in terms of three new variables $e \mmod{q^{\star}}$, $f \mmod{4 v^{\star}}$ and $g \mmod{r}$, so that, 
$x \equiv e \mmod{q^{\star}}$, $x \equiv f \mmod{4v^{\star}}$ and $x \equiv g \mmod{r}$. Using \eqref{eq Chinese remainder}
we see that
\begin{align*}
  \frac{x^2}{4 v} - \frac{a x}{q} + \frac{u x}{4v q^{\star}} & \equiv \frac{x^2 \overline{r} + u x \overline{q^{\star} r}}{4 v^{\star}} + \frac{x (u \overline{4 v^{\star} r} - a \overline{r})}{q^{\star}} + \frac{x^2 \overline{4 v^{\star}} - a x \overline{q^{\star}} + u x \overline{4 v^{\star} q^{\star}}}{r}  \mmod{1}
  \\ & \equiv \frac{f^2 \overline{r} + u f \overline{q^{\star} r}}{4 v^{\star}} + \frac{e (u \overline{4 v^{\star} r} - a \overline{r})}{q^{\star}} + \frac{g^2 \overline{4 v^{\star}} - a g \overline{q^{\star}} + u g \overline{4 v^{\star} q^{\star}}}{r} \mmod{1}.
\end{align*}
Executing the sum over $e$ we obtain the condition $4 v^{\star} a \equiv u \mmod {q^{\star}}$
times $q^{\star}$. On the other hand the sum over $f$ and $g$ gives rise to Gauss sums. We will need an exact evaluation 
only in the case $r = 1$, for $r > 1$ a bound will suffice. Thus, if $r = 1$, 
$$
\sum_{x \mmod{4v q / r}} 
e \Big ( \frac{x^2}{4 v} - \frac{a x}{q} 
+ \frac{r u x}{4 v q}  \Big ) = q \mathbf{1} \Big ( 4 v a \equiv u \mmod{q} \Big ) \cdot \sqrt{2} e \Big ( \frac{1}{8} \Big ) \mathbf{1}_{2 \mathbb{Z}}(u) \sqrt{4 v} e \Big ( - \frac{(u / 2)^2 \overline{q}^2}{4 v} \Big ), 
$$
while if $r > 1$, then, 
$$
\sum_{x \mmod{4v q / r}} 
e \Big ( \frac{x^2}{4 v} - \frac{a x}{q} 
+ \frac{r u x}{4 v q}  \Big ) \ll \frac{q}{r} \cdot \mathbf{1} \Big ( 4 v^{\star} a \equiv u \mmod{q^{\star}} \Big ) \sqrt{v}.
$$
As a result we get for $r = 1$, (after the change of variable $u \mapsto 2 u$)
\begin{equation} \label{eq:slavez}
\Sigma(v) = \sqrt{2} e \Big ( \frac 18 \Big )\sqrt{\frac{v}{N}}
 \sum_{\substack{u \in \mathbb{Z}
 \\ 2 v a \equiv u \mmod{q}}} 
 e \Big ( - \frac{u^2 \overline{q}^2}{4 v}  \Big ) 
 F_{\Phi, V, N \theta} 
 \Big ( \frac{v}{\sqrt{N}}, \frac{u}{q / \sqrt{N}} \Big ).
\end{equation}
Meanwhile for $r  > 1$ we have, 
$$
\Sigma(v) \ll \sqrt{\frac{v}{N}} \sum_{\substack{u \in \mathbb{Z}, u \neq 0 \\ 4 v^{\star} a \equiv u \mmod{q^{\star}}}} \Big | F_{\Phi, V, N \theta} \Big ( \frac{v}{\sqrt{N}}, \frac{r u}{q / \sqrt{N}} \Big ) \Big |. 
$$
Notice that the condition $u \neq 0$ in the above sum follows from $4 v^{\star} a \equiv u \mmod{q^{\star}}$ and $(4 v^{\star} a, q^{\star}) = 1$.
Therefore, we can write, 
\begin{align*}
R_{\Phi, V} & \Big ( \frac{a}{q} - \theta; N \Big ) = \widehat{\Phi}(0) \widehat{V}(0) \\ & + 4 \mathrm{Re} \Big ( \sum_{\substack{v > 0,\ u \in \mathbb{Z}, \ (v, q) = 1 \\ 2 v a \equiv u \pmod{q}}} e \Big ( - \frac{u^2 \overline{q}^2}{4 v} \Big ) F_{\Phi, V, N \theta} \Big ( \frac{v}{\sqrt{N}}, \frac{u}{q / \sqrt{N}} \Big )\Big ) \\ & \ \ \ \ \ \ + O \Big ( \sqrt{N} \sum_{\substack{v > 0 \\ (v,q) > 1}} \frac{|\Sigma(v)|}{\sqrt{v}} \Big ) + O_V(N^{-1/2}).
\end{align*}
\subsubsection{The terms with $r := (v,q) > 1$}
We will now show that the contribution of the terms with $r > 1$ is negligible. 
In the following lines the value of the large constant $A > 10$ may change from occurence to occurence. 
Splitting into dy-adic intervals, we get, 
$$
\sqrt{N} \sum_{\substack{v > 0 \\ (v,q) > 1}} \frac{|\Sigma(v)|}{\sqrt{v}} \ll \sum_{\substack{r | q \\ r > 1}} \sum_{K, L \geq 0} \ \sum_{\substack{|v| \in I_K \sqrt{N}, \ r | v \\ 0 < |u| \in I_L, \ (v,q / r) = 1 \\ 4 v a \equiv r u \pmod{q / r}}} \Big | F_{\Phi, V, N \theta} \Big ( \frac{v}{\sqrt{N}}, \frac{r u}{q / \sqrt{N}} \Big ) \Big |   
$$
where $I_K := [2^K - 1, 2^{K + 1} -1]$. We notice that the terms with $2^L \geq \max ((2^K \Delta)^{2},\Delta^4 r^2)$ contribute, 
$$
\ll_{A, \Phi, V} \sum_{r \geq 1} \sum_{\substack{K \geq 0 \\ 2^L \geq (2^K \Delta)^{2}}} \Big ( \frac{2^K \sqrt{N}}{q} + 1 \Big ) 2^{L} \cdot \frac{1 + 2^{A K}}{1 + (r 2^L / \Delta)^A} \ll_{\varepsilon, A, \Phi, V} \Delta^{-A}
$$
Furthermore, the term with $2^K > \Delta r$ and $2^L \leq \max((2^{K} \Delta)^2, \Delta^4 r^2) \leq (2^K \Delta)^2$ contribute, 
$$
\ll_{A, \Phi, V} \sum_{r \geq 1} \sum_{\substack{K \geq 0 \\ 2^K \geq \Delta r}} \Big ( \frac{2^K \sqrt{N}}{q} + 1 \Big ) \cdot (2^K \Delta)^{2} \cdot \frac{1}{1 + 2^{K A}}  \ll_{A,\Phi, V} \Delta^{-A}.
$$
Thus we are left with terms with $2^K \leq \Delta r$ and $2^L \leq \Delta^4 r^2$. Since all the prime factors of $q$ are $\geq \Delta^{2001}$ if $r > 1$ and $r | q$ then $r \geq \Delta^{2001}$. Thus, 
$$
F_{\Phi, V, N \theta} \Big ( \frac{v}{\sqrt{N}}, \frac{r u}{q / \sqrt{N}} \Big ) \ll_{A, \Phi, V} \frac{1 + \Delta^A}{1 + (r / \Delta)^A} \ll \frac{1 + \Delta^{2 A}}{1 + r^{A / 2} \Delta^{1000 A}} \ll r^{-A / 2} \Delta^{-A}.
$$
As a result the bound in this case is
$$
\ll_{A, \Phi, V} \Delta^{-A} \sum_{r \geq 1} \frac{1}{r^A} \cdot \Delta^5 r^3  \ll_{A, \Phi, V} \Delta^{-A}.
$$


\subsubsection{Conclusion} Combining \eqref{eq:masterz} and \eqref{eq:slavez},
while keeping track of the error terms,
we infer
\begin{align*}
R_{\Phi, V} \Big ( \frac{a}{q} - \theta; N \Big ) 
= \widehat{\Phi}(0) \widehat{V}(0) + 4  \mathrm{Re} \Big (  
\sum_{\substack{v > 0, \ (v,q) = 1 \\ u \in \mathbb{Z} \\ 2 v a \equiv u \mmod{q}}}   
e \Big (  \frac{- \overline{q}^2 u^2}{4 v} \Big ) &
 F_{\Phi, V, N \theta} 
 \Big ( \frac{v}{\sqrt{N}}, 
 \frac{u}{q / \sqrt{N}} \Big ) \Big ) \\ & 
 + O_{A, \Phi, V}( \Delta^{-A} + N^{-1/2} ).
\end{align*}
Notice that the conjugate of the sum over $u$ is equal to the same sum with $v$ replaced by $-v$. This follows from $\overline{F_{\Phi, V, N \theta}}(\xi, \eta) = F_{\Phi, V, N \theta}( -\xi, -\eta)$ and the fact that in the sum over $u \in \mathbb{Z}$ we can replace $u$ by $-u$. As a result, 
\begin{align*}
R_{\Phi, V} \Big ( \frac{a}{q} - \theta; N \Big ) 
= \widehat{\Phi}(0)\widehat{V}(0) + 
2 \sum_{\substack{
 v \in \mathbb{Z}, (v,q) = 1 \\ u \in \mathbb{Z} \\ 2 v a \equiv u \mmod{q}}}  
e \Big (  \frac{- \overline{q}^2 u^2}{4 v} \Big ) &
 F_{\Phi, V,N \theta} \Big ( \frac{v}{\sqrt{N}}, \frac{u}{q / \sqrt{N}} \Big ) \\ & + O_{A, \Phi, V}(\Delta^{-A} + N^{-1/2}).
\end{align*}

\subsubsection{Restricting to $(u,v) \in \mathcal{B}_{a, \Delta, q}$}

Reprising the argument that we used to restrict to $(v,q) = 1$ we can restrict the sum over $u$ and $v$ to $|v| \leq \Delta \sqrt{N}$ and $|u| \leq \Delta^4$ at the cost of an error that is $\ll_{A, \Phi, V} \Delta^{-A}$. This immediately gives a point-wise bound of $\ll_{\Phi, V} \Delta^4$. Furthermore, we can then extend the $v$ sum back to include all terms at the cost of an error term that is $\ll_{A, \Phi} \Delta^{-A}$. This gives the restriction to $(u,v) \in \mathcal{B}_{a, \Delta, q}$. 
With this restriction in place, we can insert into the phase the term 
$$
e \Big ( \frac{u^2}{4 v q^2} \Big ) = 1 + O \Big ( \frac{\Delta^6}{v N} \Big )
$$
without affecting any of the error terms.



 \section{Proof of Proposition \ref{prop4}}

 Throughout it will be convenient to use vector notation writing $\boldsymbol{v}$ for a vector in $\mathbb{R}^k$ with component $(v_1, \ldots, v_k)$,
 and the abbreviation 
 $$
 L := \sum_{q \in \mathcal{Q}_{\Delta, N}} \varphi(q).
 $$
 Proving Proposition \ref{prop4} 
 amounts to establishing a power-saving asymptotic for
  $$
\frac{N}{L} \sum_{\substack{(a,q) = 1
\\ q \in \mathcal{Q}_{\Delta,N}}} \int_{\mathbb{R}} \Big ( 
\sum_{(u,v)\in\mathcal{B}_{a, \Delta, q}} e \Big (  - \frac{\overline{q}^2 u^2}{4 v} + \frac{u^2}{4 v q^2} \Big ) F_{\Phi, V, N \theta} \Big ( \frac{v}{\sqrt{N}}, \frac{u}{q / \sqrt{N}} \Big ) \Big )^k \phi(N \theta) d \theta.
 $$
Expanding the $k^{th}$-power 
and making a linear change of variable in $\theta$, we get
 \begin{equation}\label{eq: kth moment expanded}
\frac{1}{L} 
\sum_{\substack{q \in 
\mathcal{Q}_{\Delta, N}\\ (a,q) = 1 \\ (\mathbf{u}, \mathbf{v}) \in \mathcal{B}_{a, \Delta, q}^k}}
e \Big ( -\sum_{i \leq k} 
\frac{\overline{q}^2 u_i^2}{4 v_i} + \sum_{i \leq k} \frac{u_i^2}{4 v_i q^2}
\Big ) 
\int_{\mathbb{R}} 
\prod_{i = 1}^{k} 
F_{\Phi, V, \theta} 
\Big ( \frac{v_i}{\sqrt{N}}, 
\frac{u_i}{q / \sqrt{N}} \Big ) 
\phi(\theta) d \theta. 
 \end{equation}
The variables $v_i$ and $u_i$ are chained by the condition $v_i u_j \equiv v_j u_i \mmod{q}$ for all $1 \leq i, j \leq k$. 
It therefore makes sense 
to introduce variables 
$\ell_i \in \mathbb{Z}$ such that,
 $$
 u_1 v_i = u_i v_1 + q \ell_i
 $$
 for all $1 \leq i \leq k$.  
 To proceed further 
 we need to diminish the modulus 
 of the phase function 
 in \eqref{eq: kth moment expanded}. 
 This is accomplished in the next lemma.
\begin{lemma} \label{le:transform}
  Let $q \neq 0$, odd,  be given. Let $\{u_i\}_i, \{v_i\}_i, 
  \{\ell_i\}_i$ be sequences of integers 
such that $u_1 v_i = u_i v_1 + q \ell_i$ 
and $(4 u_i v_i,q)=1$ for all $1 \leq i \leq k$.
  Then, 
  $$
  \sum_{i \leq k}
  \frac{\overline{q}^2 u_i^2}{4 v_i} \equiv \frac{u_1 \overline{4 v_1} 
  \sum_{1 \leq i \leq k} u_i}{q^2} - \frac{u_1 \overline{4 v_1^2} 
  \sum_{1 \leq i \leq k} \ell_i}{q} 
  + \sum_{i \leq k} \frac{u_i^2}{4 v_i q^2} \mmod{1}.
  $$
  \end{lemma}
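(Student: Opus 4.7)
The plan is to reduce the statement to two elementary manipulations: a purely rational identity obtained from the chain relation $u_1 v_i = u_i v_1 + q \ell_i$, and an application of the Chinese remainder identity \eqref{eq Chinese remainder}.

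First, I would rewrite the chain relation as $u_i v_1 = u_1 v_i - q\ell_i$, square, and divide through by $4 v_i v_1^2$ to obtain the rational identity
\[
\frac{u_i^2}{4 v_i} = \frac{u_1 u_i}{4 v_1} - \frac{u_1 q \ell_i}{4 v_1^2} + \frac{q^2 \ell_i^2}{4 v_i v_1^2}.
\]
After multiplying by $\overline{q}^2$ and reducing modulo $1$, the congruences $\overline{q}^2 q \equiv \overline{q}$ and $\overline{q}^2 q^2 \equiv 1$ modulo $4 v_i v_1^2$ (legitimate because $q$ is odd and coprime to $u_i v_i$, so $\overline{q}$ may be chosen as an inverse of $q$ modulo any common multiple of the relevant moduli) collapse the middle and last terms, yielding
\[
\frac{\overline{q}^2 u_i^2}{4 v_i} \equiv \frac{\overline{q}^2 u_1 u_i}{4 v_1} - \frac{\overline{q}\, u_1 \ell_i}{4 v_1^2} + \frac{\ell_i^2}{4 v_i v_1^2} \pmod{1}.
\]

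Second, I would sum over $i \leq k$ and apply \eqref{eq Chinese remainder} with the coprime factorisations $(4 v_1)\cdot q^2$ and $(4 v_1^2)\cdot q$. This splits each of the first two terms into a piece whose denominator is a pure power of $q$ (producing the first two terms of the target right-hand side) and a residual piece with denominator coprime to $q$, namely $\frac{u_1 \sum_i u_i}{4 v_1 q^2}$ and $-\frac{u_1 \sum_i \ell_i}{4 v_1^2 q}$. Collecting these residual pieces with the surviving $\sum_i \frac{\ell_i^2}{4 v_i v_1^2}$ term, one recognises that their combined contribution matches exactly $\sum_i \frac{u_i^2}{4 v_i q^2}$: this is the very same rational identity from the first step, now divided through by $q^2$ and summed over $i$. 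This produces the third term of the target and completes the argument.

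The main obstacle is purely bookkeeping: one must interpret $\overline{q}$ consistently across the denominators $4 v_i$, $4 v_1$, $4 v_1^2$, and $4 v_i v_1^2$, and track the signs coming from \eqref{eq Chinese remainder} without error. Both issues are resolved by fixing $\overline{q}$ once and for all as an inverse of $q$ modulo the LCM of all these quantities; this inverse exists because $q$ is odd and coprime to each $v_i$ by hypothesis, and then every intermediate reduction such as $\overline{q}^2 q^2 \equiv 1$ is automatic.
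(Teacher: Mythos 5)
Your argument is correct and takes a genuinely different route from the paper's. The paper collects all $k$ summands over the common denominator $w := 4 v_1 \cdots v_k$ to form $a\,\overline{q}_w^2/w$ with $a\in\mathbb{Z}$, applies \eqref{eq Chinese remainder} once to peel off the $q^2$-denominator piece, and then evaluates $a\,\overline{w}_{q^2} \equiv \sum_i u_i^2 (\overline{4v_i})_{q^2}$ by a $q$-adic expansion of each inverse $(\overline{4(u_iv_1 + q\ell_i)})_{q^2}$, via the geometric-series step $(\overline{1+q\ell_i(\overline{u_iv_1})_{q^2}})_{q^2} \equiv 1 - q\ell_i(\overline{u_iv_1})_{q^2} \mmod q^2$. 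Your route instead extracts a rational identity over $\mathbb{Q}$ by squaring the chain relation, multiplies by $\overline{q}^2$ and reduces term by term, applies CRT twice (once to the $4v_1$-denominator term, once to the $4v_1^2$-denominator term), and then observes that the leftover pieces reassemble into $\sum_i u_i^2/(4v_iq^2)$ by reusing the same rational identity divided through by $q^2$. This is arguably more transparent, avoiding the $q$-adic inverse expansion entirely; the paper's version is slightly more compact (one CRT application instead of two). One bookkeeping remark: the CRT splitting carries a minus sign on the pure-$q$-power piece, e.g.\ $\overline{q}^2/(4v_1) \equiv 1/(4v_1q^2) - \overline{4v_1}/q^2 \mmod 1$, so carried out carefully your computation yields
$$
-\frac{u_1\overline{4v_1}\sum_i u_i}{q^2}+\frac{u_1\overline{4v_1^2}\sum_i\ell_i}{q}+\sum_i\frac{u_i^2}{4v_iq^2} \mmod 1,
$$
with the first two signs opposite to the lemma's displayed statement. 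This is exactly what the paper's own proof derives in its final display, so the lemma statement appears to carry a sign typo; it is harmless for the application, since the lemma is only consumed through $e(\cdot)$ to isolate the condition $\kappa=\eta=0$ and to bound the complementary terms in absolute value, and neither purpose is sensitive to the sign.
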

\begin{proof}
Let $w := 4 v_1 \ldots v_k$. 
To avoid confusion, we denote the modular inverse of $x$ with respect
to modulus $q$ by $\overline{x}_{q}$.
Thus, 
$$
\sum_{i \leq k} \frac{(\overline{q})_{4 v_i}^2 u_i^2}{4 v_i} 
\equiv \sum_{i \leq k} \frac{\overline{q}_{w}^2 u_i^2}{4 v_i} 
\equiv \frac{a {\overline{q}}_{w}^2}{4 v_1 \ldots v_k} 
\,\mathrm{mod}\,1 \quad \mathrm{with} \quad
a = w \sum_{i \leq k} \frac{u_i^2}{4 v_i} \in \mathbb{Z}.
$$
By the Chinese remainder theorem,
\begin{equation} \label{crt1}
\frac{a \overline{q}_{w}^2}{w} \equiv - 
\frac{a \overline{w}_{q^2}}{q^2} + \frac{a}{w q^2}
\, \,\mathrm{mod}\,1
\end{equation}
so it remains to understands
\begin{equation} \label{crt2}
a \overline{w}_{q^2} \equiv \sum_{i \leq k} u_i^2 (\overline{4 v_i})_{q^2}
\,\mathrm{mod}\,\,q^2.
\end{equation}
Using that $u_1 v_i = u_i v_1 + q \ell_i$ and that
\begin{align*}
  (\overline{4 v_i})_{q^2} & \equiv u_1 (\overline{4 u_1 v_i})_{q^2} \mmod{q^2}
  \\ & \equiv u_1 (\overline{4 u_i v_1 + 4 q \ell_i})_{q^2} \mmod{q^2}
  \\ & \equiv u_1 (\overline{4 u_i v_1})_{q^2} \cdot (\overline{1 + q \ell_i (\overline{u_i v_1})_{q^2}})_{q^2} \mmod{q^2}
  \\ & \equiv u_1 (\overline{4 u_i v_1})_{q^2} \cdot (1 - q \ell_i (\overline{u_i v_1})_{q^2})  \mmod{q^2}
  \\ & \equiv u_1 (\overline{4 u_i v_1})_{q^2} - q \ell_i u_1 (\overline{4 u_i^2 v_1^2})_{q^2} \mmod{q^2}. 
\end{align*}
Therefore
$$
u_i^2 (\overline{4 v_i})_{q^2} \equiv u_1 u_i (\overline{4 v_1})_{q^2} - q \ell_i u_1 (\overline{4 v_1^2})_{q^2} \mmod{q^2}. 
$$
and thus
\begin{equation} \label{crt3}
\sum_{1 \leq i \leq k} u_i^2 (\overline{4 v_i})_{q^2} \equiv u_1 (\overline{4 v_1})_{q^2} \sum_{1 \leq i \leq k} u_i - q u_1 (\overline{4 v_1^2})_{q^2} \sum_{1 \leq i \leq k} \ell_i \mmod{q^2}. 
\end{equation}
Combining Equations \eqref{crt1}, \eqref{crt2}, \eqref{crt3} we get,
$$
\frac{a \overline{q}_{w}^2}{w} \equiv - \frac{u_1 (\overline{4 v_1})_{q^2} \sum_{i = 1}^{k} u_i}{q^2} + \frac{u_1 (\overline{4 v_1^2})_{q} \sum_{i = 1}^{k} \ell_i}{q} + \frac{a}{w q^2} \mmod{1},
$$
as required.
\end{proof}
In the notation of Lemma \ref{le:transform},
letting $ \kappa:= u_1 + \ldots + u_k$
and $\eta:= \ell_1 + \ldots + \ell_k$,
$$
  e\Big(- \sum_{i \leq k}
  \frac{\overline{q}^2 u_i^2}{4 v_i}
  + \sum_{i \leq k} \frac{u_i^2}{4 v_i q^2} \Big)
  =  
  e\Big(- \frac{u_1 \overline{4 v_1} 
  \kappa }{q^2} + 
  \frac{u_1 \overline{4 v_1^2} \eta}{q} 
  \Big).
$$
This motivates us to separate the terms 
$u_i$ and $v_i$ into two categories; 
\begin{enumerate}
\item [a.] the main term, corresponding to terms with $\kappa = 0 = \eta$
\item [b.] the error term, corresponding to terms with $\kappa \neq 0$ or $\eta \neq 0$
\end{enumerate}
We denote the main term by, 
\begin{equation} \label{eq:maintt}
\mathcal{M}_k:=\frac{1}{L} \int_{\mathbb{R}} 
\sum_{\substack{q \in \mathcal{Q}_{\Delta, N} \\ (a,q) = 1}} \sum_{\substack{(\mathbf{u}, \mathbf{v}) \in \mathcal{B}_{a, \Delta, q}^{k} 
\\ v_1 + \ldots + v_k = 0 \\ u_1 + \ldots + u_k = 0}} 
G_{\Phi, V, \theta, k}
\Big ( \frac{\boldsymbol{v}}{\sqrt{N}}, 
\frac{\boldsymbol{u}}{q / \sqrt{N}} \Big )\phi(\theta) d \theta
\end{equation}
where the function $G_{\Phi, V, \theta, k}$ 
is defined as
$$
G_{\Phi, V, \theta, k}(\boldsymbol{\xi},\boldsymbol{\eta}) = \prod_{i = 1}^{k} F_{\Phi, V, \theta} ( \xi_i, \eta_i).
$$
We will now asymptotically estimate $\mathcal{M}_k$ and then show that
the contribution of the remaining terms is negligible. 

\subsection{The main term}
In evaluating the main term $\mathcal{M}_k$ it's enough for us to focus on the sum over $v_1, \ldots, v_k$ and the sum over $(a,q) = 1$ and $q \in \mathcal{Q}_{\Delta, N}$. We leave the variables $u_1, \ldots, u_k$ and $\theta$ untouched. 
To capture the condition $v_1 + \ldots + v_k = 0$ we introduce a smooth function $w\geq 0$ compactly supported in $(-1/2, 1/2)$ 
and such that $w(0) = 1$. Set $w(\boldsymbol{v}) = w(v_1 + \ldots + v_k)$. We notice that $u_1 (v_1 + \ldots + v_k) = q (\ell_1 + \ldots + \ell_k)$ and therefore if $v_1 + \ldots + v_k \neq 0$ then $|v_1 + \ldots + v_k| \geq q / |u_1| > \sqrt{N} / \Delta^3$. 
Therefore, 
$$
w \Big ( \frac{\boldsymbol{v}}{\sqrt{N} / \Delta^3} \Big ) \neq 0
$$
if and only if $v_1 + \ldots + v_k = 0$.  
Executing Poisson summation over $v_1, \ldots, v_{k}$ we obtain
\begin{align*}
& \sum_{\substack{(\boldsymbol{v}, \boldsymbol{u}) \in \mathcal{B}_{a, \Delta, q}^{k} \\ v_1 + \ldots + v_k = 0}}  G_{\Phi, V, \theta, k} \Big ( \frac{\boldsymbol{v}}{\sqrt{N}}, \frac{\boldsymbol{u}}{q / \sqrt{N}} \Big ) = \sum_{\substack{(\boldsymbol{v}, \boldsymbol{u}) \in \mathcal{B}_{a, \Delta, q}^{k}}}  G_{\Phi, V, \theta, k} \Big ( \frac{\boldsymbol{v}}{\sqrt{N}}, \frac{\boldsymbol{u}}{q / \sqrt{N}} \Big ) w \Big (\frac{\boldsymbol{v}}{\sqrt{N} / \Delta^3} \Big ) \\ & = \frac{1}{(q / \sqrt{N})^{k}} \sum_{\substack{\boldsymbol{t} \in \mathbb{Z}^{k}}} e \Big ( - \frac{\overline{2a} \langle \boldsymbol{u}, \boldsymbol{t} \rangle }{q} \Big ) \int_{\substack{\boldsymbol{\xi} \in \mathbb{R}^{k}}} G_{\Phi, V, \theta, k} \Big ( \boldsymbol{\xi}, \frac{\boldsymbol{u}}{q / \sqrt{N}} \Big ) e \Big ( - \frac{\langle \boldsymbol{\xi}, \boldsymbol{t} \rangle}{q / \sqrt{N}} \Big ) w (\Delta^3 \boldsymbol{\xi} ) d \boldsymbol{\xi}
\end{align*}
where $\mathbf{t} := (t_1, \ldots, t_k)$. 
Since, 
$$\Big ( \prod_{i = 1}^{k} \frac{\partial^{\ell_i}}{\partial^{\ell_i} \xi_i} \Big ) G_{\Phi, V, \theta, k} ( \boldsymbol{\xi}, \boldsymbol{\eta} ) w(\Delta^3 \boldsymbol{\xi}) \ll_{\Phi, V, \boldsymbol{\ell}} \prod_{i = 1}^{k} \frac{\Delta^{3 \ell_i}}{1 + |\xi_i|^{2}}$$ with $\boldsymbol{\ell} = (\ell_1, \ldots, \ell_k)$ 
we can integrate by parts in each variable $\xi_i$, getting that, the integral over $\boldsymbol{\xi} \in \mathbb{R}^k$ is bounded by 
$$
\ll_{\Phi, V, \boldsymbol{\ell}} \prod_{i = 1}^k \frac{\Delta^{3 \ell_i}}{1 + |t_i / \Delta|^{\ell_i}}.
$$
This allows us to introduce the truncation $|t_i| \leq N^{1/(100000k)} \leq q^{1/(50000k)}$ at the price of
a total error term that $\ll_{\Phi, V, A} N^{-A}$ for any given $A > 10$. 

We now execute the sum over $(a,q)  = 1$. A main term $\varphi(q)$ comes out from terms with $\langle \boldsymbol{t}, \boldsymbol{u} \rangle = 0$. 
In all other cases, since $|\langle \boldsymbol{t}, \boldsymbol{u} \rangle | \leq k \Delta^4 q^{1/(50000k)}$ we have, 
$$
\sum_{(a,q) = 1} e \Big ( - \frac{\overline{2 a} \langle \boldsymbol{t}, \boldsymbol{u} \rangle}{q}  \Big ) = \sum_{\substack{d | q \\ d | \langle \boldsymbol{t}, \boldsymbol{u} \rangle}} d \mu \Big ( \frac{q}{d} \Big ) \ll |\langle \boldsymbol{t}, \boldsymbol{u} \rangle|^2 \ll k^2 \Delta^8 q^{1/(25000k)}.
$$
Finally it remains to execute the sum over $q \in \mathcal{Q}_{\Delta, N}$. 
We let
\begin{align*}
\mathfrak{F}_{\boldsymbol{t}, \boldsymbol{u}, \theta}(x) & := \frac{1}{x^{k}}  \int_{\substack{\boldsymbol{\xi} \in \mathbb{R}^{k}}} G_{\Phi, V, \theta, k} \Big ( \boldsymbol{\xi}, \frac{\boldsymbol{u}}{x} \Big ) e \Big ( - \frac{\langle \boldsymbol{\xi}, \boldsymbol{t} \rangle}{x} \Big ) w(\Delta^3 \boldsymbol{\xi}) d \boldsymbol{\xi} \\ & = \int_{\boldsymbol{\xi} \in \mathbb{R}^k} G_{\Phi, V, \theta, k} \Big ( x \cdot \boldsymbol{\xi}, \frac{\boldsymbol{u}}{x} \Big ) e (- \langle \boldsymbol{\xi}, \boldsymbol{t} \rangle) w (x \cdot \Delta^3 \boldsymbol{\xi}) d \boldsymbol{\xi}.
\end{align*}
Let $\widetilde{L} := \sum_{q \in \mathcal{Q}_{\Delta, N}} \frac{\varphi(q)}{q}$.
By integration by parts, 
\begin{align} \nonumber
& \frac{1}{\widetilde{L}}  \sum_{q \in \mathcal{Q}_{\Delta, N}} \frac{\varphi(q)}{q} \cdot \Big ( \frac{q}{\sqrt{N}} \mathfrak{F}_{\boldsymbol{t}, \boldsymbol{u}, \theta} \Big ( \frac{q}{\sqrt{N}} \Big ) \Big ) - \frac{1}{\Delta} \int_{\Delta}^{2 \Delta} v \mathfrak{F}_{\boldsymbol{t}, \boldsymbol{u}}(v) dv  \\ \nonumber & = \int_{\Delta}^{2 \Delta} v \mathfrak{F}_{\boldsymbol{t}, \boldsymbol{u}, \theta}(v) d \Big ( \frac{1}{\widetilde{L}} \sum_{q \in [\Delta \sqrt{N},v \sqrt{N}] \cap \mathcal{Q}_{\Delta, N}} \frac{\varphi(q)}{q} - \frac{v - \Delta}{\Delta} \Big ) \ll_{\Phi, V, k} Q^{-1/2 + 1/1000}
\end{align}
and where we used Lemma \ref{le:comput} and the estimate, 
$$
\frac{\partial}{\partial u} \mathfrak{F}_{\boldsymbol{t}, \boldsymbol{u}}(u) \ll_{\Phi, V, k} \Delta^{10} \ll_{\Phi,V, k} \log Q.
$$
Therefore, 
$$
\frac{1}{L} \sum_{q \in \mathcal{Q}_{\Delta, N}} \varphi(q) \mathfrak{F}_{\boldsymbol{t}, \boldsymbol{u}, \theta} \Big ( \frac{q}{\sqrt{N}} \Big ) = \frac{\widetilde{L} \sqrt{N}}{L} \cdot \frac{1}{\Delta} \int_{\Delta}^{2 \Delta} u \mathfrak{F}_{\boldsymbol{t}, \boldsymbol{u}, \theta} (u) du + O_{\Phi, V, k}(Q^{-1/2 + 1/1000})
$$
Using again Lemma \ref{le:comput} and the bound $\mathfrak{F}_{\boldsymbol{t}, \boldsymbol{u}, \theta}(v) \ll v^{-k}$ we conclude that the above is 
$$
\frac{2}{3 \Delta^2} \int_{\Delta}^{2 \Delta} v \mathfrak{F}_{\boldsymbol{t}, \boldsymbol{u}, \theta}(v) dv + O_{\Phi, V, k}(Q^{-1/2 + 1/1000}). 
$$
Thus we conclude, 
$$
\mathcal{M}_{k} = \sum_{\substack{\boldsymbol{t}, \boldsymbol{u} \in \mathbb{Z}^k \\ \forall i : 1 \leq |u_i| \leq \Delta^4 \\ \langle \boldsymbol{t}, \boldsymbol{u} \rangle = 0 = \langle \boldsymbol{1}, \boldsymbol{u} \rangle }} \int_{\mathbb{R}} \Big ( \frac{2}{3 \Delta^2} \int_{\Delta}^{2 \Delta} v \mathfrak{F}_{\boldsymbol{t}, \boldsymbol{u}, \theta} (v) dv \Big )  \phi(\theta) d \theta   + O_{\Phi, V, k}(\Delta^{4k} Q^{-1/2 + 1/1000}).
$$

\subsection{The error term}

We now consider the contribution of terms with 
$\kappa := u_1 + \ldots + u_k \neq 0$ or with 
$\eta := \ell_1 + \ldots + \ell_k \neq 0$. 
Executing the summation over $(a,q) = 1$ 
and writing $u_1 v_i = u_i v_1 + q \ell_i$ we express 
the error term as,
 \begin{equation}\label{eq: error term first form}
\frac{1}{L} 
\sum_{\substack{q \in 
\mathcal{Q}_{\Delta, N} 
\\ 1 \leq |u_i| \leq 
\Delta^{4} 
\\ (v_1 , q) = 1
\\ u_1 v_i = u_i v_1 + q \ell_i 
\\ (\kappa , \eta) \neq (0, 0)}} 
\varphi(q)
e \Big ( - \frac{u_1 \overline{4 v_1} 
\kappa}{q^2} + \frac{u_1 
\overline{4 v_1^2} \eta}{q} \Big ) 
\int_{\mathbb{R}} \prod_{i = 1}^{k}
F_{\Phi, V, \theta} \Big ( \frac{v_i}{\sqrt{N}}, \frac{u_i}{q / \sqrt{N}} \Big ) \phi(\theta) 
d \theta. 
 \end{equation}
 For any given $\varepsilon > 0$ using the decay of $F_{\Phi, V, \theta}$ we can 
 truncate the $v_i$ sum at $|v_i| \leq N^{1/2 + \varepsilon}$, incurring a total error of $\ll_{\Phi, V, A, \varepsilon} N^{-A}$ for any given $A$.
 In particular, this restricts $|\ell_i|$ to be less than $\ll_{\varepsilon} N^{\varepsilon}$.  
 We can therefore view the variables $u_i$ and $\ell_i$ as fixed, summing over them incurs only an additional error of $\ll \Delta^{4 k} N^{\varepsilon k}$. Furthermore, the sum is zero unless
 $u_1 | u_i v_1 + q \ell_i$ for all $1 \leq i \leq k$. In that case we see that all the variables $v_i$ are determined by $v_1$, 
 $$
 v_i = \frac{u_i v_1 + q \ell_i}{u_1}.
 $$
 Given a choice of $u_i, \ell_i$ we can therefore assume from now on that $v_1$ lies in one of the admissible (i.e $u_i v_1 + q \ell_i \equiv 0 \pmod{u_1}$ for all $i$) classes modulo $u_1$. Notice that this is allowed since the variables $u_i, \ell_i$ are considered fix (at the cost of an additional error $\ll \Delta^{4k } N^{\varepsilon k}$). 

Thus the problem is reduced to establishing a non-trivial power-saving bound for 
\begin{equation} \label{eq:mainexp}
\sum_{\substack{(v, q) = 1 \\ v \equiv \tau \mmod{u_1}}} e \Big ( - \frac{u_1 \overline{4 v_1} \kappa}{q^2} + \frac{u_1 \overline{4 v_1^2} \eta}{q} \Big ) G \Big ( \frac{v}{\sqrt{N}} \Big ),
\end{equation}
where at least one of $\kappa \neq 0$ or $\eta \neq 0$, $\tau$ is fixed, and
\begin{equation} \label{eq:expo}
G \Big ( \frac{v}{\sqrt{N}} \Big ) = \int_{\mathbb{R}} \prod_{i = 1}^{k} F_{\Phi, V, \theta} \Big ( \frac{u_i v + q \ell_i}{u_1 \sqrt{N}} , \frac{u_i}{q / \sqrt{N}} \Big ) \phi(\theta) d \theta.  
\end{equation}
We notice that $G$ remains a fairly inert function, since
$$
\frac{\partial^k}{\partial^k v} G(v) \ll_{\Phi, V,k} \Delta^{4 k}. 
$$ 
First we show that we can assume $\kappa \neq 0$: Indeed in the case $\kappa = 0$ it's enough to apply Poisson summation to modulus $q u_1$ to see that \eqref{eq:expo} is 
$$
\ll_{\varepsilon} N^{\varepsilon} \cdot \Big ( 1 + \sup_{|\ell| \leq N^{\varepsilon}} \Big | \sum_{\substack{(x, q) = 1 \\ x \equiv \tau \pmod{u}}} e \Big ( \frac{\eta \overline{4 x^2}}{q} - \frac{\ell x}{q u} \Big ) \Big | \Big ) \ll_{\varepsilon} N^{1/4 + \varepsilon}
$$
using the Chinese remainder theorem and the estimation of quadratic Gauss sums. 

We assume now that $\kappa \neq 0$. Recall that $q$ factors as $a \cdot b$ by assumption, with $Q^{1/1000} \leq a \leq 2 Q^{1/1000}$. We now perform Weyl differencing with shift $a^2 h u_1$. This allows us to re-write \eqref{eq:mainexp} as
$$
\sum_{\substack{v \equiv \tau \mmod{u_1}}} \frac{1}{H} \sum_{\substack{|h| \leq H \\ (v + h a^2 u_1, q) = 1}} e \Big ( \frac{\kappa \overline{4 (v + h a^2 u_1)}}{(a b)^2} - \frac{\eta \overline{4 (v + h a^2 u_1)^2}}{a b} \Big ) 
G \Big ( \frac{v}{\sqrt{N}} \Big ) + O_{\Phi, V, \varepsilon}(H N^{\varepsilon}),
$$
with $H = Q^{1/2000}$. 
Furthermore, 
$$
\frac{\kappa \overline{4 (v + h a^2 u_1)}}{(a b)^2} - \frac{\eta \overline{4 (v + h a^2 u_1)^2}}{a b} \equiv \frac{\kappa \overline{4 b^2 v}}{a^2} + \frac{\kappa \overline{4 a^2 (v + h a^2 u_1)}}{b^2} - \frac{\eta \overline{4 v^2 b}}{a} - \frac{\eta \overline{4 a (v + h a^2 u_1)^2}}{b}
\mmod 1
$$
and the condition $(v + h a^2 u_1, q) = 1$ can be expressed as $(v, a) = 1$ and $(v + h a^2 u_1, b) = 1$. 
We now apply the Cauchy-Schwarz 
inequality
bounding the above by
$$\ll_{\Psi, V,k } \Delta^2 N^{1/4} \mathcal{S}^{1/2} 
$$
where
$$
\mathcal{S} := \sum_{v \in \mathbb{Z}} \Big | \frac{1}{H} \sum_{\substack{|h| \leq H \\ (v + h a^2 u_1, b) = 1}} e \Big ( \frac{\kappa \overline{4 a^2 (v + h a^2 u_1)}}{b^2} - \frac{\eta \overline{4 a (v + h a^2 u_1)^2}}{b} \Big ) \Big |^2 W \Big ( \frac{v}{\Delta^4 \sqrt{N}} \Big )
$$
with $W$ a smooth function with bounded derivatives and such that, 
$$
\Big |G \Big ( \frac{v}{\sqrt{N}} \Big) \Big | \leq W \Big ( \frac{v}{\Delta^4 \sqrt{N}} \Big ).
$$
Write, 
$$
\varphi_{a,b, \kappa, \eta, u_1}(v; h) := \frac{\kappa \overline{4 a^2 (v + h a^2 u_1)}}{b^2} - \frac{\eta \overline{4 a (v + h a^2 u_1)^2}}{b}.
$$
Expanding the square we find that, 
$$
\mathcal{S} \ll \frac{\Delta^4 \sqrt{N}}{H} + \frac{1}{H^2} \sum_{\substack{|h_1|, |h_2| \leq H \\ h_1 \neq h_2}} \Big | \mathcal{S}_{h_1, h_2} \Big |
$$
where $\mathcal{S}_{h_1, h_2}$ is defined as
$$
\sum_{\substack{v \in \mathbb{Z} \\ (v + h_1 a^2 u_1, b) = 1 \\ (v + h_2 a^2 u_1, b) = 1}} e \Big ( \varphi_{a,b,\kappa,\eta, u_1}(v; h_1) - \varphi_{a,b,\kappa,\eta, u_1}(v; h_2) \Big ) W \Big ( \frac{v}{\Delta^4 \sqrt{N}} \Big ).
$$
By Poisson summation, 
\begin{equation} \label{eq:poisson201}
\mathcal{S}_{h_1, h_2} = \frac{\Delta^4 \sqrt{N}}{b^2}\sum_{\ell \in \mathbb{Z}} S(h_1, h_2, \ell) \widehat{W} \Big ( \frac{\ell \Delta^4 \sqrt{N}}{b^2} \Big )
\end{equation}
where $S(h_1, h_2, \ell)$ is defined as
$$
\sum_{\substack{x \mmod{b^2} \\ ( x + h_1 a^2 u_1, b) = 1 \\ (x + h_2 a^2 u_1, b) = 1 }} e \Big (  \frac{\kappa \overline{4 a^2 (x + h_1 a^2 u_1)} - \kappa \overline{4 a^2 (x + h_2 a^2 u_1)}}{b^2} -  \frac{\eta \overline{4 a (x + h_1 a^2 u_1)^2} - \eta \overline{4 a (x + h_2 a^2 u_1)^2}}{b} - \frac{\ell x}{b^2} \Big ).
$$
To evaluate $S(h_1, h_2, \ell)$ we write $x = \alpha + \beta b$ with $\alpha, \beta \mmod{b}$. Then the $\beta$ variable runs freely, and
$$
\overline{x + h_i a^2 u_1} \equiv \overline{\alpha + h_i a^2 u_1} - b (\overline{\alpha + h_i a^2 u_1})^2 \beta \mmod{b^2}.
$$
Therefore upon summing over $\beta$ we obtain a factor of $b$ provided that
$$
\kappa \overline{4 a^2} \Big ( (\overline{\alpha + h_1 a^2 u_1})^2  - (\overline{\alpha + h_2 a^2 u_1})^2 \Big ) \equiv \ell \mmod{b}.
$$
This is equivalent to
$$
\kappa \Big ( (\alpha + h_2 a^2 u_1)^2 - (\alpha + h_1 a^2  u_1)^2 \Big ) 
\equiv 
4 a^2 \ell (\alpha + h_1 a^2 u_1)^2 (\alpha + h_2 a^2 u_1)^2 \mmod{b}.
$$
We now count the number of solutions to this equation in $\alpha$ prime by prime, incidentally we recall that $b$ is square-free and that $(4 a u_1, b) = 1$. Let $p | b$. 
If $p \nmid \ell$ then there are at most $4$ solutions to the above equation modulo $p$, since $(4a,b) = 1$, so we can transform this equation into a monic polynomial of degree $4$. If $p | \ell$ then there is exactly one solution unless $p | \kappa (h_1 - h_2)$. Altogether this gives us,
$$
S(h_1, h_2, \ell) \ll 4^{\omega(b)} \cdot b \cdot (\ell \kappa (h_1 - h_2), b) \ll_{\varepsilon} b^{1 + \varepsilon} |\kappa|  H \cdot (\ell, b).
$$ 
We also notice that the $\ell = 0$ term admits a sharper bound of the form
$$
\ll b \cdot (\kappa (h_1 - h_2), b) \ll b |\kappa| H .
$$
As $b \asymp Q^{1 - 1/1000}$ and $H = Q^{1/2000}$, plugging these bounds into \eqref{eq:poisson201}
shows that \eqref{eq:poisson201} is 
$$
\ll \frac{\Delta^4 \sqrt{N}}{b^2} \cdot b^{1 + \varepsilon} |\kappa| H \cdot \Big ( 1 + \sum_{0 < |\ell| < Q^{1 - 1/500 + \varepsilon}} (b,\ell) \Big ) \ll_{\varepsilon} N^{\varepsilon} \cdot \frac{\sqrt{N}}{Q^{1 - 1/1000}} \cdot H  \cdot Q^{1 - 1/500} \ll N^{1/2 - 1 / 4000 + \varepsilon}.
$$
Therefore $\mathcal{S} \ll N^{1/2 - 1 / 4000 + \varepsilon}$ and therefore \eqref{eq:mainexp} is $\ll_{\Psi, V, \varepsilon} N^{1/2 - 1 / 8000 + \varepsilon}$, as needed. 

\section{Proof of Proposition \ref{prop:smoothingprop}}

Let $\mathcal{B}_{(1,2)}$ denote the region $(1 - \eta, 1) \cup (2, 2 + \eta)$ and let $\mathcal{B}_{(\eta,s)}$ denote the region $(0, \eta) \cup (s, s + \eta)$. The condition $N \| \sqrt{n} - x \| \in (0,s)$ is equivalent to $N (\sqrt{n} \mmod{1} - x) \in (0,s)$ except for $x$ in a neighborhood of size $\leq 10 s / N \leq s \eta$ around $x = 1$ and $x = 0$. Let $E_{n, s}(x)$ denote the event $N \| \sqrt{n} - x \| \in (0,s).$
We then have, 
$$
\Big | \mathbf{1} \Big ( \sum_{N \leq n < 2N} \mathbf{1}_{E_{n, s}(x)} = 0 \Big )  - \mathbf{1} \Big ( \sum_{n \in \mathbb{Z}} \mathbf{1}_{E_{n,s}(x)} V \Big ( \frac{n}{N} \Big ) = 0 \Big ) \Big | \leq \sum_{n \in N \mathcal{B}_{(1,2)}} \mathbf{1}_{E_{n,s}(x)},
$$
and also 
\begin{align*}
\Big | \mathbf{1} \Big ( \sum_{n \in \mathbb{Z}} \mathbf{1}_{E_{n,s}(x)} V \Big ( \frac{n}{N} \Big ) = 0 \Big ) - \mathbf{1} \Big ( & \sum_{n \in \mathbb{Z}} \Phi \Big ( N \| \sqrt{n} - x \| \Big ) V \Big ( \frac{n}{N} \Big ) = 0 \Big ) \Big | \\ & \leq \sum_{n \in \mathbb{Z}} \mathbf{1} \Big ( N \| \sqrt{n} - x \| \in \mathcal{B}_{(\eta,s)} \Big ) V \Big ( \frac{n}{N} \Big ).
\end{align*}
Therefore, 
\begin{align*}
\Big | \mathbf{1} \Big ( \sum_{N \leq n < 2N} \mathbf{1}_{E_{n, s}(x)} = 0 \Big )  - \mathbf{1} & \Big ( \sum_{n \in \mathbb{Z}} \Phi \Big ( N \| \sqrt{n} - x \| \Big ) V \Big ( \frac{n}{N} \Big ) = 0 \Big )  \Big | \\ \leq &  \sum_{n \in N \mathcal{B}_{(1,2)}} \mathbf{1}_{E_{n,s}(x)} + \sum_{n \in \mathbb{Z}} \mathbf{1} \Big ( N \| \sqrt{n} - x \| \in \mathcal{B}_{(\eta,s)} \Big ) V \Big ( \frac{n}{N} \Big )
\end{align*}
Integrating over $x$ and excluding neighborhoods of size $\leq s \eta$ around $x = 1$ and $x = 0$ we get 
$$
|\mathcal{V}(s, N; \mu) - \mathcal{V}_{\Phi, V}(N; \mu)| \leq 2 s \eta + 2 \eta + 2 \eta \cdot (1 + 2 \eta) \leq 8 (1 + s) \eta.
$$

\section{Proof of Proposition \ref{prop2}}

Consider, 
$$
\mathcal{V}_{\Phi,V} (N; \mu_{\Delta N, \mathcal{Q}_{\Delta, N}}) = \int_{0}^{1} \mathbf{1} \Big ( R_{\Phi, V}(x; N) = 0\Big )  d \mu_{\Delta N, \mathcal{Q}_{\Delta, N}}(x). 
$$
We now abbreviate $R = R_{\Phi, V}(x; N)$ and $\widetilde{R} := R_{\Phi, V}(x; N)$. 
We recall that, $0 \leq R_{\Phi, V}(x; N) \ll_{\Phi, V} \Delta^{10}$ for $x$ in the support of the measure $\mu_{N, \mathcal{Q}_{\Delta, N}}$. Therefore upon setting $\kappa = 1000 \log \Delta$, we get for $x$ in the support of $\mu_{N, \mathcal{Q}_{\Delta, N}}$,
\begin{align*}
\mathbf{1} ( R = 0 ) & = \exp( - \kappa R ) + O_{\Phi, V}(\Delta^{-10}) + O(\mathbf{1}_{0 < R < 1/100}) \\ & = \exp( - \kappa \widehat{\Phi}(0) \widehat{V}(0)) \exp( - 2 \kappa \widetilde{R}) + O_{\Phi, V} (\Delta^{-10}) + O(\mathbf{1}_{0 < R < 1/100}) \\ & = \exp( - \kappa \widehat{\Phi}(0) \widehat{V}(0)) \Big ( \sum_{0 \leq \ell \leq \Delta^{100}} \frac{(-2 \kappa)^{\ell}}{\ell!} \cdot \widetilde{R}^{\ell} \Big ) + O_{\Phi, V}(\Delta^{-10}) + O(\mathbf{1}_{0 < R < 1/100})
\end{align*}
Integrating over $x \in (0,1)$ with respect to the measure $\mu_{N, \mathcal{Q}_{\Delta, N}}$ and using Proposition \ref{prop4} we conclude that, 
\begin{align*}
\liminf_{N \rightarrow \infty} \ \mathcal{V}_{\Phi,V} (N; \mu_{N, \mathcal{Q}_{\Delta, N}}) & - \limsup_{N \rightarrow \infty} \ \mathcal{V}_{\Phi,V} (N; \mu_{N, \mathcal{Q}_{\Delta, N}}) \\ & = O_{\Phi, V}(\Delta^{-10}) + O \Big (\int_{0}^{1} \mathbf{1}_{0 < R_{\Phi, V}(x; N) < 1 / 100} \cdot dx \Big ).
\end{align*}
It remains to notice that 
$$\mathbf{1} \Big ( R_{\Phi, V}(x; N) \in (0, \tfrac{1}{100}) \Big ) \leq \sum_{n} \mathbf{1} \Big ( N \| \sqrt{n} - x \| \in (0,\eta) \cup (s, s + \eta) \Big ) V \Big ( \frac{n}{N} \Big )$$
since $\Phi(x) \in \{0,1\}$ for all non-negative $x$ except $x \in (0, \eta) \cup (s, s + \eta)$. 
Integrating over $x \in (0,1)$ then yields the required bound, 
$$
\int_{0}^{1} \mathbf{1}_{0 < R_{\Phi, V}(x; N) < 1/100} \cdot dx \ll \eta
$$
with an absolute constant in the $\ll$ since $0 \leq V \leq 1$ and $V$ is compactly supported in $(1/2, 3)$.

\section{Proof of the Elkies-McMullen theorem}\label{se:proofall}

By Proposition \ref{prop0} the existence of the gap distribution follows from the existence of a limiting behavior for the void statistic. Thus it is enough to show that
$$
\lim_{N \rightarrow \infty} \mathcal{V}(s; N)
$$
exists for each given $s > 0$.  By Proposition \ref{prop1} we have,
$$
\mathcal{V}(s,N)
 = \mathcal{V}(s,N; \mu_{N, \mathcal{Q}_{\Delta, N}}) + O \Big ( \frac{1}{\Delta^{1/2}} \Big ). 
 $$
 We select smooth functions $0 \leq \Phi, V \leq 1$ compactly supported in respectively $(0,s)$ and $(1 - \eta,2 + \eta)$ and
 equal to one in respectively $(\eta,s)$ and $(1, 2)$.
 By Proposition \ref{prop:smoothingprop} we have,
 $$
 \mathcal{V}_{\Delta}(s, N) = \mathcal{V}_{\Phi, V, \Delta}(N) + O((1 + s) \eta)
$$
for $N > 100 / \eta$, 
 while by Proposition \ref{prop2},
$$ 
\limsup_{N \rightarrow \infty} \mathcal{V}_{\Phi, V, \Delta}(N)
 - \liminf_{N \rightarrow \infty} \mathcal{V}_{\Phi, V, \Delta}(N) 
= O_{\Phi, V}(\Delta^{-10}) + O(\eta) .
$$
Combining all these together, it follows that,
$$
\limsup_{N \rightarrow \infty} \mathcal{V}(s; N) 
- \liminf_{N \rightarrow \infty} \mathcal{V}(s; N) = 
O_{\Phi, V}(\Delta^{-1/2}) + O((1 + s) \eta).
$$
We let $\Delta \rightarrow \infty$ and then $\eta \rightarrow 0$, and the claim follows\footnote{Note that we cannot first let $\eta \rightarrow 0$ and then $\Delta \rightarrow \infty$ because letting $\eta \rightarrow 0$ first might make the implicit constant in $O_{\Phi, V}(\Delta^{-10})$ grow out of bounds as both $\Phi$ and $V$ depend on $\eta$}. 
\section{Appendix}

We show here that the existence of a the gap distribution of a sequence follows from the existence of the void statistic. 
Specifically that Proposition \ref{prop0} implies the main theorem. 
This is basically already shown in \cite{Marklof}. Here we recall his proof. First we note that the sequence of measures
$$
F_{N} := \frac{1}{N} \sum_{N \leq n < 2N} \delta  ( N (s_{n + 1, N} - s_{n, N} ) )
$$
is tight; this follows simply because the first moment is uniformly bounded, 
$$
\frac{1}{N} \sum_{N \leq n < 2N} (N (s_{n + 1, N} - s_{n, N})) \leq 1.
$$
As a result the sequence of measures $F_{N}$ is relatively compact. Therefore for any subsequence $N_k$ we can find a convergent subsequence $\{ N_k' \} \subset \{ N_k \}$ such that $F_{N_k'}$ converges weakly to a limit $P^{\star}$. We aim to show that all limits $P^{\star}$ coincide, as a result the entire sequence of measures $F_{N}$ is weakly convergent.

Suppose to the contrary that we can find two subsequences $\{M_k\}$ and $\{M_k'\}$ such that $F_{M_k}$ converges weakly to $P_1$ while $F_{M_k'}$ converges weakly to $P_2$. Since $P_1$ and $P_2$ are assumed not to coincide we can find a smooth compactly supported function $f$ such that,
$$
\int_{0}^{\infty} f(x) \,d F_{M_k} (x)
\rightarrow \int_0^{\infty} f(x) \,d P_1(x) 
\neq \int_{0}^{\infty} f(x) \,d P_{2}(x) 
\leftarrow \int_{0}^{\infty} f(x) \,d F_{M_k'}(x).
$$
Since the set of functions spanned 
by linear combinations of
$g_{L}(x) := \max(x, L)$ is dense in
$[0, \infty]$ there exists an $L$ such that
$$
\int_{0}^{\infty} g_{L}(x) \, d P_1(x) 
\neq \int_{0}^{\infty} g_{L}(x) \, d P_2(x). 
$$
By the computation in \cite{Marklof} we also have, 
$$
\frac{1}{M_k} \int_{0}^{1} 
\mathbf{1} ( \mathcal{N}(x; L, M_k) = 0 ) \, dx 
\rightarrow \int_{0}^{\infty} g_{L}(x) d P_1(x)
$$
and 
$$
\frac{1}{M_k'} \int_{0}^{1} \mathbf{1} 
( \mathcal{N}(x; L, M_k') = 0 ) \, dx
\rightarrow \int_{0}^{\infty} g_{L}(x) d P_2(x).
$$
However Proposition \ref{prop0} shows that the limit of the void statistic exists, so we get,
$$
\int_{0}^{\infty} g_{L}(x) \, d P_1(x) 
= \int_{0}^{\infty} g_{L}(x) \, d P_2(x),
$$
a contradiction. 

\subsection*{Acknowledgements}
M.R. was supported by NSF grant DMS-1902063.
NT was supported by the Austrian Science Fund (FWF): 
project number J 4464-N. 
While working on this paper, he visited
the California Institute of Technology 
and Graz University of Technology,
whose hospitality he acknowledges.
\bibliography{em}
\bibliographystyle{plain}

\end{document}